\newtheorem{theorem}{Theorem}
\newtheorem{corollary}[theorem]{Corollary}
\newtheorem{definition}[theorem]{Definition}
\newtheorem{example}[theorem]{Example}
\newtheorem{proposition}[theorem]{Proposition}
\newtheorem{remark}[theorem]{Remark}
\numberwithin{equation}{section}
\def\({\left ( }
\def\){\right )}
\def\<{\left < }
\def\>{\right >}
\begin{document}
\title{\textbf{Topologies and approximation operators induced by binary
relations}}
\author{Nurettin BA\u{G}IRMAZ, A. Fatih \"{O}ZCAN Hatice TA\c{S}BOZAN and 
\.{I}lhan \.{I}\c{C}EN}
\address{MARDIN ARTUKLU UNIVERSITY, MARDIN, TURKEY }
\email{nurettinbagirmaz@artuklu.edu.tr, tel:+90 482 213 40 02, Fax: +90 482
213 40 04 }
\thanks{}
\subjclass[2000]{22A22, 54H13, 57M10}
\keywords{Rough sets, rough topology}

\begin{abstract}
Rough set theory is an important mathematical tool for dealing with
uncertain or vague information. This paper studies some new topologies
induced by a binary relation on universe with respect to neighborhood
operators. Moreover, the relations among them are studied. In additionally,
lower and upper approximations of rough sets using the binary relation with
respect to neighborhood operators are studied and examples are given.
\end{abstract}

\maketitle

\vspace{2cm}

\section{Introduction}

Rough set theory was introduced by Pawlak as a mathematical tool to process
information with uncertainty and vagueness \cite{pawlak1982}.The rough set
theory deals with the approximation of sets for classification of objects
through equivalence relations. Important applications of the rough set
theory have been applied in many fields, for example in medical science,
data analysis, knowledge discovery in database \cite{pawlak2002,
pawlak2007,thivager}.

The original rough set theory is based on equivalence relations, but for
practical use, needs to some extensions on original rough set concept. This
is to replace the equivalent relation by a general binary relation \cite%
{yao,yao1,lashin,jarvinen,xu}. Topology is one of the most important
subjects in mathematics. Many authors studied relationship between rough
sets and topologies based on binary relations \cite{pei,allam,li,qin}. In
this paper, we proposed and studied connections between topologies generated
using successor, predecessor, successor-and-predecessor, and
successor-or-predecessor neighborhood operators as a subbase by various
binary relations on a universe, respectively. In addition to this, we
investigate connection between lower and upper approximation operators using
successor, predecessor, successor-and-predecessor, and
successor-or-predecessor neighborhood operators by various binary relations
on a universe, respectively. Moreover, we give several examples for a better
understanding of the subject.

\section{Preliminaries}

In this section, we shall briefly review basic concepts and relational
propositions of the relation based rough sets and topology. For more
details, we refer to \cite{yao,yao1,pei}.

\subsection{Basic properties relation based rough approximations and
neighborhood operators}

\ 

In this paper, we always assume that $U$ is a finite universe, i.e., a
non-empty finite set of objects, $R$ is a binary relation on $U$, i.e., a
subset of $U^{2}$ $=U\times U$ \cite{pei}.

$R$ is serial if for each $x\in U$, there exists $y\in U$ such that $%
(x,y)\in R$; $R$ is inverse serial if for each $x\in U$, there exists $y\in
U $ such that $(y,x)\in R$; $R$ is reflexive if for each $x\in U$, $(x,x)\in
R$; $R$ is symmetric if for all $x,y\in U$, $(x,y)\in R$ implies $(y,x)\in R$
$;R$ is transitive if for all $x,y,z\in U$, $(x,y)\in R$ and $(y,z)\in R$
imply $(x,z)\in R$ [9].

$R$ is called a pre-order (relation) if $R$ is both reflexive and
transitive; $R$ is called a similarity (or, tolerance) relation if $R$ is
both reflexive and symmetric; $R$ is called an equivalence relation if $R$
is reflexive, symmetric and transitive \cite{pei}.

Given a universe $U$ and a binary relation $R$ on $U$, $x,y\in U,$ the sets\ 
\begin{align*}
R_{s}(x) & =\left\{ y\in U|(x,y)\in R\right\} , \\
R_{p}(x) & =\left\{ y\in U|(y,x)\in R\right\} , \\
R_{s\wedge p}(x) & =\left\{ y\in U|(x,y)\in R\text{ }and\text{ }(y,x)\in
R\right\} =R_{s}(x)\cap R_{p}(x), \\
R_{s\vee p}(x) & =\left\{ y\in U|(x,y)\in R\text{ }or\text{ }(y,x)\in
R\right\} =R_{s}(x)\cup R_{p}(x)
\end{align*}

\hspace{-0.45cm}are called the successor, predecessor,
successor-and-predecessor, and successor-or-predecessor neighborhood of $x$,
respectively, and the following four set-valued operators from $U$ to the
power set $P\left( U\right) $

\begin{align*}
R_{s} & :x\mapsto R_{s}(x), \\
R_{p} & :x\mapsto R_{p}(x), \\
R_{s\wedge p} & :x\mapsto R_{s\wedge p}(x), \\
R_{s\vee p} & :x\mapsto R_{s\vee p}(x)
\end{align*}

\hspace{-0.45cm}are called the successor, predecessor,
successor-and-predecessor, and successor-or-predecessor neighborhood
operators, respectively. Relationships between these neighborhood systems
can be expressed as:

\begin{align*}
R_{s\wedge p}(x)& \subseteq R_{s}(x)\subseteq R_{s\vee p}(x), \\
R_{s\wedge p}(x)& \subseteq R_{p}(x)\subseteq R_{s\vee p}(x)
\end{align*}%
\cite{yao1,pei}.

\begin{definition}
\cite{yao1} Let $R$ be a binary relation on $U$. The ordered pair $(U,R)$ is
called a (generalized) approximation space based on the relation $R$. For $%
X\subseteq U$, the lower and upper approximations of $X$ \ with respect to $%
R_{s}(x),$ $R_{p}(x),$ $R_{s\wedge p}(x),$ $R_{s\vee p}(x)$ are respectively
defined as follows:%
\begin{align*}
\underline{apr}_{R_{s}}(X)& =\left\{ x\in U|R_{s}(x)\subseteq X\right\} , \\
\underline{apr}_{R_{p}}(X)& =\left\{ x\in U|R_{p}(x)\subseteq X\right\} , \\
\underline{apr}_{R_{s\wedge p}}(X)& =\left\{ x\in U|R_{s\wedge
p}(x)\subseteq X\right\} , \\
\underline{apr}_{R_{s\vee p}}(X)& =\left\{ x\in U|R_{s\vee p}(x)\subseteq
X\right\} , \\
\overline{apr}_{R_{s}}(X)& =\left\{ x\in U|R_{s}(x)\cap X\neq \emptyset
\right\} , \\
\overline{apr}_{R_{p}}(X)& =\left\{ x\in U|R_{p}(x)\cap X\neq \emptyset
\right\} , \\
\overline{apr}_{R_{s\wedge p}}(X)& =\left\{ x\in U|R_{s\wedge p}(x)\cap
X\neq \emptyset \right\} , \\
\overline{apr}_{R_{s\vee p}}(X)& =\left\{ x\in U|R_{s\vee p}(x)\cap X\neq
\emptyset \right\} .
\end{align*}
\end{definition}

In Pawlak's classical rough set theory for lower and upper approximations
operators, an equivalence relation $R$ is used. In this case, four
neighborhood operators become the same, i.e., $R_{s}(x)=$ $R_{p}(x)=$ $%
R_{s\wedge p}(x)=$ $R_{s\vee p}(x)=[x]_{R}$, where $[x]_{R}$ is the
equivalence class containing $x$.

\begin{proposition}
\cite{yao} For an arbitrary neighborhood operator in an approximation space $%
(U,R),$ the pair of approximation operators satisfy the following properties:
\end{proposition}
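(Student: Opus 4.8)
The plan is to prove all of the listed properties uniformly, exploiting the fact that every one of the four pairs of operators has exactly the same shape. Writing $N$ for a generic neighborhood operator among $R_{s}, R_{p}, R_{s\wedge p}, R_{s\vee p}$, the definitions read
\begin{align*}
\underline{apr}_{N}(X) &= \{x\in U : N(x)\subseteq X\}, \\
\overline{apr}_{N}(X) &= \{x\in U : N(x)\cap X\neq \emptyset\}.
\end{align*}
It therefore suffices to establish each property for an abstract set-valued map $N:U\to P(U)$, and the four concrete cases follow by substitution with no case analysis on the type of neighborhood.

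First I would prove duality, $\underline{apr}_{N}(X)=U\setminus\overline{apr}_{N}(U\setminus X)$, by a direct membership equivalence: $x\in\underline{apr}_{N}(X)$ iff $N(x)\subseteq X$ iff $N(x)\cap(U\setminus X)=\emptyset$ iff $x\notin\overline{apr}_{N}(U\setminus X)$. This single computation halves the remaining work, since any statement about $\underline{apr}_{N}$ transfers to one about $\overline{apr}_{N}$ by complementation. Next I would handle monotonicity and the lattice identities by element chasing: if $X\subseteq Y$ then $N(x)\subseteq X$ forces $N(x)\subseteq Y$, giving $\underline{apr}_{N}(X)\subseteq\underline{apr}_{N}(Y)$, with the upper version following by duality. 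For the meet, $\underline{apr}_{N}(X\cap Y)=\underline{apr}_{N}(X)\cap\underline{apr}_{N}(Y)$ comes from $N(x)\subseteq X\cap Y\Leftrightarrow(N(x)\subseteq X\text{ and }N(x)\subseteq Y)$, and dualizes to $\overline{apr}_{N}(X\cup Y)=\overline{apr}_{N}(X)\cup\overline{apr}_{N}(Y)$. The one-sided inclusions $\underline{apr}_{N}(X)\cup\underline{apr}_{N}(Y)\subseteq\underline{apr}_{N}(X\cup Y)$ and $\overline{apr}_{N}(X\cap Y)\subseteq\overline{apr}_{N}(X)\cap\overline{apr}_{N}(Y)$ follow by the same kind of argument.

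The step I expect to require the most care is the boundary/normality behaviour — statements such as $\underline{apr}_{N}(U)=U$, $\overline{apr}_{N}(\emptyset)=\emptyset$, and any claim comparing the two operators, for instance $\underline{apr}_{N}(X)\subseteq\overline{apr}_{N}(X)$. The subtlety is the empty-neighborhood phenomenon: if $N(x)=\emptyset$ for some $x$, which happens precisely when $R$ fails the relevant seriality condition, then vacuously $x\in\underline{apr}_{N}(\emptyset)$ while $x\notin\overline{apr}_{N}(U)$. Hence $\underline{apr}_{N}(\emptyset)=\emptyset$, $\overline{apr}_{N}(U)=U$, and the inclusion $\underline{apr}_{N}(X)\subseteq\overline{apr}_{N}(X)$ hold only under the appropriate seriality hypothesis, and I would flag that dependence explicitly rather than list them among the unconditional properties. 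The properties valid for an arbitrary relation — duality, monotonicity, $\underline{apr}_{N}(U)=U$, $\overline{apr}_{N}(\emptyset)=\emptyset$, and the distributive laws above — are exactly those whose proofs never invoke nonemptiness of $N(x)$, and this is the dividing line I would foreground in the write-up.
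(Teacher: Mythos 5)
The paper gives no proof of this proposition at all---it is imported from \cite{yao} as a citation---so there is no in-paper argument to compare against and your attempt must be judged on its own terms. For the unconditional part it is correct and well organized: the duality $(L0)/(U0)$ via $N(x)\subseteq X \Leftrightarrow N(x)\cap X^{c}=\emptyset$, the identity $(L1)$, and the distributive laws $(L2)/(U2)$ all go through exactly as you describe for an abstract set-valued $N$. Your remark about the empty-neighborhood phenomenon is moreover a genuine catch: as printed, $(U1)$ reads $\underline{apr}(\emptyset)=\emptyset$, which is false for an arbitrary relation (any $x$ with $N(x)=\emptyset$ belongs to $\underline{apr}(\emptyset)$); the intended statement is surely the dual of $(L1)$, namely $\overline{apr}(\emptyset)=\emptyset$, which does hold unconditionally.

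There is, however, a real gap: the proposition does not end at $(L2)/(U2)$. It also asserts $(L3)/(U3)$ when $R$ is reflexive, $(L4)/(U4)$ when $R$ is symmetric, and $(L5)/(U5)$ when $R$ is transitive, and your write-up never touches these. They do not come for free from the abstract framework; each needs the corresponding property of the neighborhood map itself, namely $x\in N(x)$ for $(L3)$, $y\in N(x)\Rightarrow x\in N(y)$ for $(L4)$, and $y\in N(x)\Rightarrow N(y)\subseteq N(x)$ for $(L5)$, after which each lower statement dualizes to its upper companion. For $R_{s}$, $R_{p}$ and $R_{s\wedge p}$ these properties are inherited directly from reflexivity, symmetry and transitivity of $R$, so the uniform treatment survives; but for $R_{s\vee p}$ transitivity of $R$ does \emph{not} transfer. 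Take $U=\{x,y,z\}$ and $R=\{(y,x),(y,z)\}$, which is vacuously transitive: then $R_{s\vee p}(y)=\{x,z\}$ while $R_{s\vee p}(x)=R_{s\vee p}(z)=\{y\}$, and with $X=\{x,z\}$ one gets $\underline{apr}_{s\vee p}(X)=\{y\}\not\subseteq\{x,z\}=\underline{apr}_{s\vee p}(\underline{apr}_{s\vee p}(X))$, so $(L5)$ fails for the ``or'' neighborhood. Completing the proof therefore requires both adding the conditional clauses and either restricting the operators for which $(L5)/(U5)$ are claimed or flagging this exception, just as you flagged the seriality issue for $(U1)$.
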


\ 

$(L0)$ $\underline{apr}(X)=\left( \overline{apr}(X^{c})\right) ^{c},$ \ 

$(U0)$ $\overline{apr}(X)=\left( \underline{apr}(X^{c})\right) ^{c},$ \ 

$(L1)$ $\underline{apr}(U)=U,$ \ 

$(U1)$ $\underline{apr}(\emptyset)=\emptyset,$ \ 

$(L2)$ $\underline{apr}(X\cap Y)=\underline{apr}(X)\cap\underline {apr}(Y),$
\ 

$(U2)$ $\overline{apr}(X\cup Y)=\overline{apr}(X)\cup\overline{apr}(Y).$

\ 

\hspace{-0.45cm}where $X^{c}$ is the complement of $X$ with respect to $U.$
\ 

Moreover, if $R$ is reflexive, then

\ 

$(L3)$ $\underline{apr}(X)\subseteq$ $X$, \ 

$(U3)$ $X\subseteq\overline{apr}(X).$

\ 

\hspace{-0.45cm}If $R$ is symmetric, then

\ 

$(L4)$ $X\subseteq\underline{apr}(\overline{apr}(X)),$ \ 

$(U4)$ $\overline{apr}(\underline{apr}(X))\subseteq$ $X$.

\ 

\hspace{-0.45cm}If $R$ is transitive, then

\ 

$(L5)$ $\underline{apr}(X)\subseteq\underline{apr}(\underline{apr}(X)),$ \ 

$(U5)$ $\overline{apr}(\overline{apr}(X))\subseteq$ $\overline {apr}\left(
X\right) $.

\subsection{The concept of a topological space}

\ 

In this section, we give some basic information about the topology \cite%
{janich,pei}.

\begin{definition}
\cite{janich} A topological space is a pair $(U,T)$ consisting of a set $U$
and a set $T$ of subsets of $U$ (called "open sets"), such that the
following axioms hold: \ 

$(A1)$ Any union of open sets is open.

$(A2)$ The intersection of any two open sets is open.

$(A3)$ $\emptyset$ and $U$ are open.
\end{definition}

The pair $(U,T)$ speaks simply of a topological space $U.$

\begin{definition}
\cite{pei} Let $U$ be a topological space.

\begin{enumerate}
\item $X\subseteq U$ is called closed when $X^{c}$ is open.

\item $X\subseteq U$ is called a neighborhood of $x\in X$ if there is an
open set $V$ with $x\in V\subseteq X.$

\item A point $x$ of a set $X$ is an interior point of $X$ if $X$ is a
neighborhood of $x$, and the set of all interior points of $X$ is called the
interior of $X$. The interior of $X$ is denoted by $\overset{o}{X}$.

\item The closure of a subset $X$ of a topological space $U$ is the
intersection of the family of all closed sets containing $X$. The closure of 
$X$ is denoted by $\overset{-}{X}$.
\end{enumerate}
\end{definition}

In topological space $U$, the operator 
\begin{equation*}
\overset{o}{}:P\left( U\right) \rightarrow P\left( U\right) ,\text{ }X\mapsto%
\overset{o}{X}
\end{equation*}
is an interior operator on $U$ and for all $X,Y\subseteq U$ the following
properties hold:

\ 

I1) \bigskip\ $\overset{o}{U}=U,$ \ 

I2) $\overset{o}{X}\subseteq X,$ \ 

I3)$\overset{o}{\left( \overset{o}{X}\right) }=X,$ \ 

I4)$\left( \overset{o}{X\cap Y}\right) =\left( \overset{o}{X}\right)
\cap\left( \overset{o}{Y}\right) .$

In topological space $U$, the operator 
\begin{equation*}
\overset{-}{}:P\left( U\right) \rightarrow P\left( U\right) ,\text{ }X\mapsto%
\overset{-}{X}
\end{equation*}
is a closure operator on $U$ and for all $X,Y\subseteq U$ the following
properties hold: \ 

C1) $\overset{-}{\emptyset}=\emptyset,$ \ 

C2) $X\subseteq\overline{X}$ \ 

C3) $\overline{\overline{X}}=X,$ \ 

C4) $\overline{X\cup Y}=\overline{X}\cup\overline{Y}.$

In a topological space ($U,T)$ a family $\mathcal{B}\subseteq T$ of sets is
called a base for the topology $T$ if for each point $x$ of the space, and
each neighborhood $X$ of $x$, there is a member $V$ of $\mathcal{B}$ such
that $x\in V\subseteq X$. We know that a subfamily $\mathcal{B}$ of a
topology $T$ is a base for $T$ if and only if each member of $T$ is the
union of members of $\mathcal{B}$. Moreover, $\mathcal{B}\subseteq P(U)$
forms a base for some topologies on $U$ if and only if $\mathcal{B}$
satisfies the following conditions: \ 

B1) $U=\bigcup\left\{ B|B\in\mathcal{B}\right\} $, \ 

B2) For every two members $X$ and $Y$ of $\mathcal{B}$ and each point $x\in
X\cap Y$, there is $Z\in\mathcal{B}$ such that $x\in Z\subseteq X\cap Y$. \ 

Also, a family $\mathcal{S}\subseteq T$ of sets is a subbase for the
topology $T$ if the family of all finite intersections of members of $%
\mathcal{S}$ is a base for $T$. Moreover, $\mathcal{S}\subseteq P(U)$ is a
subbase for some topology on $U$ if and only if $\mathcal{S}$ satisfies the
following condition: \ 

S0) $U=\bigcup\left\{ S|S\in\mathcal{S}\right\} $

\section{Correspondence between generating topologies by relations}

In this section, we investigate connections between topologies generated
using successor, predecessor, successor-and-predecessor, and
successor-or-predecessor neighborhood operators as a subbase by various
binary relations on a universe, respectively.

Let $R$ be a binary relation on a given universe $U$. Sets

\begin{center}
$\mathcal{S}_{i}=\bigcup \left\{ R_{i}(x)|x\in U\right\}$ ,where $%
i:s,p,s\wedge p\ and\ s\vee p$
\end{center}

defining by successor, predecessor, successor-and-predecessor, and
successor-or-predecessor neighborhood operators, respectively. If $S_{i}$ ,$%
where$ $i:s$, $p,$ $s\wedge p$ $and$ $s\vee p,$ forms a subbase for some
topology on $U$, then the topology generated $S_{i}$ ,$where$ $i:s,$ $p,$ $%
s\wedge p$ $and$ $s\vee p$ denoted $T_{i},$ $where$ $i:s,$ $p,$ $s\wedge p$ $%
and$ $s\vee p,$ respectively.

A basic problem is: when does $\mathcal{S}_{i},where$ $i:s,$ $p,$ $s\wedge p$
$and$ $s\vee p,$ form a subbase for some topologies on $U$ ?

Our aim is to solve this problem completely. This problem was solved by the
authors in \cite{pei} using the family $\mathcal{S}_{s}$ forms a subbase for
some topology on $U$ by the following theorem.

\begin{theorem}
\cite{pei} If $R$ is a binary relation on $U$, then $S_{s}$ forms a subbase
for some topologies on $U$ if and only if $R$ is inverse serial.
\end{theorem}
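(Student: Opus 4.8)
The plan is to prove the biconditional by unwinding the subbase condition (S0) for the family $\mathcal{S}_{s}$. Recall that a family $\mathcal{S}\subseteq P(U)$ is a subbase for some topology on $U$ if and only if it satisfies condition (S0), namely $U=\bigcup\{S\mid S\in\mathcal{S}\}$. Here the members of the family are the successor neighborhoods $R_{s}(x)=\{y\in U\mid (x,y)\in R\}$, so I would first reduce the problem to checking exactly when $\bigcup_{x\in U}R_{s}(x)=U$.

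For the forward direction, I would assume $\mathcal{S}_{s}$ forms a subbase, so $U=\bigcup_{x\in U}R_{s}(x)$ by (S0). I would then fix an arbitrary $y\in U$ and use this equality to locate some $x\in U$ with $y\in R_{s}(x)$; by the definition of the successor neighborhood this means $(x,y)\in R$. Since $y$ was arbitrary, this says precisely that every element has a predecessor under $R$, which is the definition of $R$ being inverse serial. For the converse, I would assume $R$ is inverse serial, so for each $y\in U$ there exists $x\in U$ with $(x,y)\in R$, i.e.\ $y\in R_{s}(x)\subseteq\bigcup_{x\in U}R_{s}(x)$. Since $y$ was arbitrary and the union is trivially contained in $U$, this yields $U=\bigcup_{x\in U}R_{s}(x)$, which is exactly (S0), so $\mathcal{S}_{s}$ is a subbase.

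The argument is essentially a direct translation between three equivalent formulations of the same set-theoretic fact: the covering condition $U=\bigcup_{x}R_{s}(x)$, the statement that every $y$ lies in some $R_{s}(x)$, and the inverse-seriality of $R$. The only genuine content is recognizing that the subbase axiom reduces entirely to (S0), since no closure-under-intersection condition is imposed on a subbase. I would not expect any real obstacle here; the one point deserving care is to argue both inclusions (or note that $\bigcup_{x}R_{s}(x)\subseteq U$ holds automatically because each $R_{s}(x)\subseteq U$), so that the whole burden rests on the reverse inclusion, which is where inverse-seriality enters. I expect the proof to be short and symmetric, with the forward and backward directions being near-mirror images of one another.
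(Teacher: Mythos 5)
Your proof is correct and takes essentially the same route as the paper: reduce the subbase question to condition (S0) and observe that the covering equality $U=\bigcup_{x\in U}R_{s}(x)$ is just a restatement of inverse seriality. In fact you are more thorough than the source, since the paper states this theorem as a citation and, in its Remark and in the analogous proofs for $\mathcal{S}_{p}$ and $\mathcal{S}_{s\vee p}$, only verifies the ``if'' direction, whereas you also carry out the converse.
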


\begin{remark}
It is clear that if $R$ is inverse serial, then

$U=\underset{x\in U}\bigcup R_{s}(x)$.

This is the condition (S0). Moreover, the family $\mathcal{S}_{s}$ is
covering of $U$.
\end{remark}

\begin{theorem}
\label{1}If $R$ is a binary relation on $U$, then $\mathcal{S}_{p}$ forms a
subbase for some topologies on $U$ if and only if $R$ is serial.
\end{theorem}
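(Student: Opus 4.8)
The plan is to prove Theorem \ref{1} by exploiting the perfect duality between the predecessor operator $R_p$ and the successor operator $R_s$, which reduces the statement to the already-established Theorem about $\mathcal{S}_s$. The key observation is that the condition (S0), namely $U=\bigcup_{x\in U}S$ over $S\in\mathcal{S}$, is both necessary and sufficient for a family to be a subbase, so the entire problem comes down to verifying a single covering condition. Concretely, I would show that $\mathcal{S}_p$ satisfies (S0) if and only if $R$ is serial.

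First I would unwind the definitions. Recall that $R_p(x)=\{y\in U\mid (y,x)\in R\}$, so a point $z$ lies in $\bigcup_{x\in U}R_p(x)$ precisely when there exists some $x\in U$ with $(z,x)\in R$. But "$(z,x)\in R$ for some $x$" is exactly the statement that $z$ has a successor, i.e. the seriality condition at the point $z$. Thus $\bigcup_{x\in U}R_p(x)=U$ holds if and only if every $z\in U$ has at least one $x$ with $(z,x)\in R$, which is precisely the definition of $R$ being serial. This is the heart of the argument and it is almost immediate once the quantifiers are written out carefully.

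For the forward direction, I would assume $\mathcal{S}_p$ forms a subbase; then by the stated characterization (S0) the family must cover $U$, and the computation above forces $R$ to be serial. For the reverse direction, I would assume $R$ is serial, deduce $U=\bigcup_{x\in U}R_p(x)$ from the same computation, observe that this is exactly condition (S0), and conclude that $\mathcal{S}_p$ is a subbase for some topology on $U$. An alternative, even cleaner route would be to observe that $R_p$ relative to $R$ coincides with $R_s$ relative to the converse relation $R^{-1}$, and that $R$ is serial if and only if $R^{-1}$ is inverse serial; invoking the cited Theorem for $\mathcal{S}_s$ on $R^{-1}$ then yields the result directly.

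I do not anticipate a genuine obstacle here, since the proof is essentially a dualization of the preceding theorem. The only point requiring care is the bookkeeping of the quantifiers: one must not conflate seriality (existence of a successor) with inverse seriality (existence of a predecessor), and must correctly match each to the predecessor-neighborhood union rather than accidentally reproducing the successor case. I would therefore state the quantifier translation explicitly to make the duality transparent and to avoid any sign-of-the-relation error.
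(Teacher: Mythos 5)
Your proposal is correct and follows essentially the same route as the paper: both reduce the statement to verifying the covering condition (S0) for the family $\mathcal{S}_{p}$, using the fact that $z\in\bigcup_{x\in U}R_{p}(x)$ iff $(z,x)\in R$ for some $x$, which is exactly seriality. Your write-up is in fact more complete than the paper's, which only states the ``if'' direction explicitly, whereas you spell out both directions and the quantifier translation.
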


\begin{proof}
If $R$ is serial, then%
\begin{equation*}
U=\underset{x\in U}\bigcup R_{p}(x)\text{ }
\end{equation*}%
and the family $S_{p}$ provides the condition (S0).\qquad
\end{proof}

\begin{theorem}
If $R$ is a binary relation on $U$, then $\mathcal{S}_{s\wedge p}$ forms a
subbase for some topologies on $U$ if and only if $R$ is symmetric and
serial or inverse serial.
\end{theorem}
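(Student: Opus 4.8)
By the subbase criterion (S0) recalled above, the family $\mathcal{S}_{s\wedge p}$ forms a subbase for some topology on $U$ precisely when $U=\bigcup_{x\in U}R_{s\wedge p}(x)$. Spelling out membership in $R_{s\wedge p}(x)$, this is equivalent to
\begin{equation*}
(\ast)\qquad\text{for every }y\in U\text{ there exists }x\in U\text{ with }(x,y)\in R\text{ and }(y,x)\in R.
\end{equation*}
Thus the whole statement reduces to checking that $(\ast)$ is equivalent to the stated relational hypothesis. The plan is to treat the two implications separately, observing first that under symmetry seriality and inverse seriality coincide, so that ``symmetric and serial or inverse serial'' is in effect a single condition.

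For the ``if'' direction I would argue directly. If $R$ is symmetric and serial, fix $y\in U$; seriality supplies $z$ with $(y,z)\in R$, and symmetry gives $(z,y)\in R$, so $x=z$ witnesses $(\ast)$. The inverse serial case is obtained by exchanging the two coordinates throughout. This half is routine.

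For the ``only if'' direction I would first harvest the cheap consequences of $(\ast)$: the witness $x$ satisfies $(y,x)\in R$, so $R$ is serial, and it also satisfies $(x,y)\in R$, so $R$ is inverse serial; hence $(\ast)$ delivers both seriality and inverse seriality at once. The remaining and decisive step, the one I expect to be the main obstacle, is to extract \emph{symmetry} from $(\ast)$.

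This is exactly where I anticipate the plan to break down, and I suspect the difficulty is genuine rather than merely technical. Condition $(\ast)$ only provides, for each $y$, \emph{one} witness $x$ carrying both arrows $(x,y)$ and $(y,x)$; it imposes no constraint on an arbitrary pair lying in $R$. In particular every reflexive relation satisfies $(\ast)$ by taking $x=y$, yet reflexive relations need not be symmetric: for $U=\{1,2\}$ and $R=\{(1,1),(2,2),(1,2)\}$ one computes $R_{s\wedge p}(1)=\{1\}$ and $R_{s\wedge p}(2)=\{2\}$, so $(\ast)$ holds although $R$ is not symmetric. Consequently the symmetry half cannot be established as stated, and the honest outcome of this line of attack is that the correct characterisation is seriality of the symmetric part: $(\ast)$ says precisely that every $y$ has a predecessor under $R\cap R^{-1}$, and since $R\cap R^{-1}$ is automatically symmetric this is the same as $R\cap R^{-1}$ being serial, which collapses to ``$R$ symmetric and serial'' exactly when $R$ itself happens to be symmetric.
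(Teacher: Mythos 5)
Your reduction to condition (S0), i.e.\ to $U=\bigcup_{x\in U}R_{s\wedge p}(x)$, and your proof of the ``if'' direction are correct and coincide with what the paper actually does: its entire proof is the remark ``Similar to Theorem \ref{1}'', and the proof of Theorem \ref{1} only verifies (S0) under the hypothesis, so the paper never argues the ``only if'' direction for any of these theorems. Your observation that under symmetry ``serial'' and ``inverse serial'' coincide is also the paper's own later proposition, so the sufficiency half is in order.

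Your diagnosis of the necessity half is correct, and the obstacle you hit is not a defect of your argument but of the theorem as stated. Condition (S0) for $\mathcal{S}_{s\wedge p}$ says exactly that every $y\in U$ admits some $x$ with $(x,y)\in R$ and $(y,x)\in R$; this forces $R$ to be serial and inverse serial but says nothing about arbitrary pairs in $R$, so symmetry cannot follow. Your counterexample checks out: for $U=\{1,2\}$ and $R=\{(1,1),(2,2),(1,2)\}$ one has $R_{s\wedge p}(1)=\{1\}$ and $R_{s\wedge p}(2)=\{2\}$, so $\mathcal{S}_{s\wedge p}=\{\{1\},\{2\}\}$ covers $U$ and is a subbase, yet $(1,2)\in R$ while $(2,1)\notin R$. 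Hence the stated equivalence is false; only the implication from ``symmetric and serial (or inverse serial)'' to the subbase property holds, and the correct characterisation is the one you give, namely that $R\cap R^{-1}$ is serial (equivalently, $R_{s\wedge p}(x)=(R\cap R^{-1})_{s}(x)$ and the symmetric relation $R\cap R^{-1}$ satisfies the hypothesis of the paper's Theorem \ref{1} type criterion). Since the paper supplies no argument for necessity, there is nothing in it that contradicts your counterexample; you have found an error in the theorem rather than left a gap in its proof.
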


\begin{proof}
Similar to Theorem \ref{1}.
\end{proof}

\begin{theorem}
\label{8}If $R$ is a binary relation on $U$, then $\mathcal{S}_{s\vee p}$
forms a subbase for some topologies on $U$ if and only if $R$ is serial or
inverse serial.
\end{theorem}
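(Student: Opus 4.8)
The plan is to reduce the statement to the subbase criterion (S0), exactly as in the proofs of the cited theorem on $\mathcal{S}_{s}$ and of Theorem \ref{1}. Since a family $\mathcal{S}\subseteq P(U)$ is a subbase for some topology on $U$ precisely when it covers $U$, the family $\mathcal{S}_{s\vee p}=\{R_{s\vee p}(x)\mid x\in U\}$ forms a subbase if and only if $U=\bigcup_{x\in U}R_{s\vee p}(x)$. The first step is therefore to rewrite this union in terms of the successor and predecessor neighborhoods, using the identity $R_{s\vee p}(x)=R_{s}(x)\cup R_{p}(x)$ recorded in the preliminaries:
\begin{equation*}
\bigcup_{x\in U}R_{s\vee p}(x)=\Big(\bigcup_{x\in U}R_{s}(x)\Big)\cup\Big(\bigcup_{x\in U}R_{p}(x)\Big).
\end{equation*}

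The second step handles the ``if'' direction. Note that $\bigcup_{x\in U}R_{s}(x)$ is the set of all $y$ possessing a predecessor, so it equals $U$ exactly when $R$ is inverse serial (this is the content of the Remark), while $\bigcup_{x\in U}R_{p}(x)$ is the set of all $y$ possessing a successor, so it equals $U$ exactly when $R$ is serial (Theorem \ref{1}). Hence, if $R$ is inverse serial the first union on the right already exhausts $U$, and if $R$ is serial the second one does; in either case (S0) holds and $\mathcal{S}_{s\vee p}$ is a subbase.

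The third step is the converse, and this is where I expect the real difficulty. Assuming (S0), the displayed identity gives only that every element of $U$ has a predecessor \emph{or} a successor, i.e. that $\mathrm{dom}(R)\cup\mathrm{ran}(R)=U$. To conclude ``serial or inverse serial'' one would have to upgrade this to: every element has a successor, \emph{or} every element has a predecessor --- and there is no mechanism forcing the choice to be uniform across $U$. In fact the upgrade fails: take $U=\{a,b\}$ and $R=\{(a,b)\}$. Then $R_{s\vee p}(a)=\{b\}$ and $R_{s\vee p}(b)=\{a\}$, so $\bigcup_{x\in U}R_{s\vee p}(x)=U$ and $\mathcal{S}_{s\vee p}$ is a subbase, yet $R$ is neither serial ($b$ has no successor) nor inverse serial ($a$ has no predecessor).

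Consequently, the approach above establishes only one implication, and I would record the sharp statement it actually yields: $\mathcal{S}_{s\vee p}$ forms a subbase for some topology on $U$ if and only if $\mathrm{dom}(R)\cup\mathrm{ran}(R)=U$, that is, if and only if every element of $U$ occurs in at least one pair of $R$. The conditions ``$R$ serial'' and ``$R$ inverse serial'' are each sufficient but, as the counterexample shows, not necessary; so to recover the biconditional as stated one would need an additional hypothesis ruling out the split behavior of the counterexample.
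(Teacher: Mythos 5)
Your argument for the ``if'' direction is exactly the paper's proof: the paper merely notes that if $R$ is serial or inverse serial then $U=\bigcup_{x\in U}R_{s\vee p}(x)$, so condition (S0) holds, and your decomposition of this union as $\left(\bigcup_{x\in U}R_{s}(x)\right)\cup\left(\bigcup_{x\in U}R_{p}(x)\right)=\mathrm{ran}(R)\cup\mathrm{dom}(R)$ simply makes that one-line observation explicit. Where you go beyond the paper is the ``only if'' direction, and there your analysis is correct: the paper offers no argument whatsoever for that implication, and in fact it is false. Your counterexample checks out --- with $U=\{a,b\}$ and $R=\{(a,b)\}$ one has $R_{s\vee p}(a)=\{b\}$ and $R_{s\vee p}(b)=\{a\}$, so $\mathcal{S}_{s\vee p}$ covers $U$ and is a subbase, while $R$ is neither serial (no successor of $b$) nor inverse serial (no predecessor of $a$). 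The correct characterization is the one you state: $\mathcal{S}_{s\vee p}$ is a subbase for some topology on $U$ if and only if $\mathrm{dom}(R)\cup\mathrm{ran}(R)=U$. The point of failure is that ``serial or inverse serial'' is a disjunction of two universally quantified statements, whereas the covering condition (S0) only requires the disjunction pointwise; this distinction is invisible for $\mathcal{S}_{s}$ and $\mathcal{S}_{p}$ taken separately (which is why Theorem \ref{1} and the cited theorem on $\mathcal{S}_{s}$ are genuine biconditionals) but becomes essential for $\mathcal{S}_{s\vee p}$. So there is no gap in your proposal; the gap is in the theorem as stated and in the paper's proof, which establishes only sufficiency.
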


\begin{proof}
If $R$ is serial or inverse serial, then%
\begin{equation*}
U=\underset{x\in U}\bigcup R_{s\vee p}(x)
\end{equation*}%
and the family $\mathcal{S}_{s\vee p}$ provides the condition (S0).
\end{proof}

Let $\mathcal{S}_{1}$ and $\mathcal{S}_{2}$ be covering of $U.$ A partition $%
\mathcal{S}_{1}$ is a finner than $\mathcal{S}_{2}$, or is coarser than $%
\mathcal{S}_{1}$, for each neighborhood operator in $\mathcal{S}_{1}$
produced by $x$, is subset the neighborhood operator in $\mathcal{S}_{2}$ by 
$x$. This relation is denoted as $\mathcal{S}_{1}$ $\preceq\mathcal{S}_{2}.$%
\begin{equation*}
\mathcal{S}_{1}\preceq\mathcal{S}_{2}\Longleftrightarrow\text{if every set
of }\mathcal{S}_{1}\text{ is contained in some sets of }\mathcal{S}_{2},%
\text{ for all }x\in U.
\end{equation*}

\begin{proposition}
\label{7}Let $U$ be the universe and $R$ is general binary relation. Then
follows as equivalent:

1) $R_{i}(x)\subseteq R_{j}(x),$

2)\ $\mathcal{S}_{i}\preceq \mathcal{S}_{j},$ for all $x\in U,$ where $%
i,j:s,p,s\wedge p$ and $s\vee p.$
\end{proposition}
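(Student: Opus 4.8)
The plan is to prove the two implications directly from the definition of the refinement relation $\preceq$, exploiting that $\mathcal{S}_{i}$ and $\mathcal{S}_{j}$ are both indexed by the same universe $U$ through their respective neighborhood operators. The organizing observation is that each member of $\mathcal{S}_{i}$ has the form $R_{i}(x)$ for a generating point $x$, and the member of $\mathcal{S}_{j}$ ``produced by that same $x$'' is $R_{j}(x)$; the whole argument is a matter of matching these indices.

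For the implication $(1)\Rightarrow(2)$, I would assume $R_{i}(x)\subseteq R_{j}(x)$ for every $x\in U$. An arbitrary member of $\mathcal{S}_{i}$ can be written as $R_{i}(x)$, and the hypothesis places it inside $R_{j}(x)\in\mathcal{S}_{j}$. Since this holds for every $x$ simultaneously, the defining condition of $\mathcal{S}_{i}\preceq\mathcal{S}_{j}$ is met. This direction is essentially immediate, as the hypothesis itself supplies, for each $x$, the containing set of $\mathcal{S}_{j}$ indexed by that same $x$.

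For the converse $(2)\Rightarrow(1)$, I would assume $\mathcal{S}_{i}\preceq\mathcal{S}_{j}$ and fix an arbitrary $x\in U$. Reading off the definition of $\preceq$, the neighborhood $R_{i}(x)$ produced by $x$ is contained in the neighborhood $R_{j}(x)$ produced by the same $x$; since $x$ was arbitrary, $R_{i}(x)\subseteq R_{j}(x)$ for all $x\in U$, which is precisely condition $(1)$.

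The step I expect to require the most care is fixing the interpretation of $\preceq$. Under the usual cover-refinement reading---``every set of $\mathcal{S}_{i}$ is contained in \emph{some} set of $\mathcal{S}_{j}$''---the converse can fail, because the containing set of $\mathcal{S}_{j}$ need not be the one indexed by the same point $x$. The equivalence therefore rests on the pointwise reading recorded in the definition, in which the set generated by $x$ in $\mathcal{S}_{i}$ lies in the set generated by that same $x$ in $\mathcal{S}_{j}$. I would make this index-matching explicit at the outset, after which both implications reduce to reading the definition in the two directions.
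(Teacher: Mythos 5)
Your proof is correct, and there is in fact nothing in the paper to compare it against: Proposition \ref{7} is stated without any proof. Your two implications, each obtained by reading the pointwise (index-matched) definition of $\preceq$ in one direction, are exactly what the statement requires, provided $\mathcal{S}_{i}$ is treated as a family indexed by the points of $U$ (so that ``the member of $\mathcal{S}_{j}$ produced by the same $x$'' is meaningful even when distinct points generate equal neighborhoods); your argument does this implicitly and it is worth saying explicitly.

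The issue you single out as delicate is the genuinely substantive one, and you resolve it the right way. The paper gives two non-equivalent formulations of $\preceq$: the prose version (the neighborhood produced by $x$ in $\mathcal{S}_{1}$ is a subset of the neighborhood produced by the same $x$ in $\mathcal{S}_{2}$) and the displayed version (every set of $\mathcal{S}_{1}$ is contained in \emph{some} set of $\mathcal{S}_{2}$). Under the displayed version the implication $(2)\Rightarrow(1)$ fails: take $U=\{a,b\}$, $R=\{(a,b),(b,b)\}$, $i=s$, $j=p$. Then $R_{s}(a)=R_{s}(b)=\{b\}$, $R_{p}(a)=\emptyset$, $R_{p}(b)=\{a,b\}$, so every member of $\mathcal{S}_{s}$ is contained in the member $\{a,b\}$ of $\mathcal{S}_{p}$, yet $R_{s}(a)\not\subseteq R_{p}(a)$. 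So the proposition is true only under the pointwise reading you adopt, and your decision to fix that interpretation at the outset is not a pedantic precaution but the step that makes the equivalence hold.
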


\begin{proposition}
Let $U$ be the universe and $R$ is a serial relation. Then the following
conditions are provided:

1) $\mathcal{S}_{p}\preceq \mathcal{S}_{s\vee p},$

2) $T_{p}\preceq T_{s\vee p}.$
\end{proposition}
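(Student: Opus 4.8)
The plan is to read both assertions off the basic neighborhood inclusions recorded in the Preliminaries, using Proposition \ref{7} for the first part and then transporting the refinement from the subbases to the generated topologies for the second. Before anything else I would note that both topologies actually exist under the hypothesis: since $R$ is serial, Theorem \ref{1} guarantees that $\mathcal{S}_{p}$ is a subbase and hence generates $T_{p}$, and since ``serial'' implies ``serial or inverse serial'', Theorem \ref{8} guarantees that $\mathcal{S}_{s\vee p}$ is a subbase and hence generates $T_{s\vee p}$. Thus $T_{p}$ and $T_{s\vee p}$ are both well defined and the statement is not vacuous.

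For part (1), I would invoke the pointwise inclusion
\begin{equation*}
R_{p}(x)\subseteq R_{s}(x)\cup R_{p}(x)=R_{s\vee p}(x),
\end{equation*}
which holds for every $x\in U$ and is exactly the relation listed among the neighborhood systems in the Preliminaries. This is precisely condition (1) of Proposition \ref{7} with $i=p$ and $j=s\vee p$, so Proposition \ref{7} immediately yields $\mathcal{S}_{p}\preceq\mathcal{S}_{s\vee p}$. Equivalently, each member $R_{p}(x)$ of $\mathcal{S}_{p}$ is contained in the member $R_{s\vee p}(x)$ of $\mathcal{S}_{s\vee p}$, which is the defining condition of $\preceq$.

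For part (2), I would transport this refinement from the subbases to the topologies they generate. A basic open set of $T_{p}$ is a finite intersection $\bigcap_{x\in F}R_{p}(x)$ over some finite $F\subseteq U$, and likewise a basic open set of $T_{s\vee p}$ has the form $\bigcap_{x\in F}R_{s\vee p}(x)$. Taking the same finite index set $F$ and intersecting the pointwise inclusions from part (1) gives
\begin{equation*}
\bigcap_{x\in F}R_{p}(x)\subseteq\bigcap_{x\in F}R_{s\vee p}(x),
\end{equation*}
so every member of the base of $T_{p}$ lies inside a member of the base of $T_{s\vee p}$. Since every open set of $T_{p}$ is a union of such basic sets, and each basic set is contained in a corresponding $T_{s\vee p}$-basic set, the whole union is contained in the (open) union of the corresponding $T_{s\vee p}$-basic sets. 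Hence every open set of $T_{p}$ is contained in an open set of $T_{s\vee p}$, i.e. $T_{p}\preceq T_{s\vee p}$.

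The two inclusions themselves are routine once the right finite index set is chosen. The step that needs the most care is the passage in part (2) from the subbase to the generated topology: one must check that $\preceq$ is preserved first under finite intersection (to reach the base) and then under arbitrary union (to reach all open sets). The intersection step is the crucial one, since it is what forces the use of the \emph{same} index $x$ on both sides --- exactly the pointwise form of the inclusion supplied by Proposition \ref{7} --- rather than the weaker ``contained in some member'' formulation.
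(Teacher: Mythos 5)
Your proof is correct and follows essentially the same route as the paper: part (1) is read off the pointwise inclusion $R_{p}(x)\subseteq R_{s\vee p}(x)$ via Proposition \ref{7}, and part (2) rests on Theorems \ref{1} and \ref{8} guaranteeing that $\mathcal{S}_{p}$ and $\mathcal{S}_{s\vee p}$ are subbases generating $T_{p}$ and $T_{s\vee p}$. The only difference is that the paper's proof of (2) stops after observing that the two topologies exist, whereas you actually transport the refinement through finite intersections and then arbitrary unions to the generated topologies --- a step the paper leaves implicit, and which you verify correctly.
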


\begin{proof}
(1) is clear from Proposition \ref{7}.

(2) From Theorem \ref{1} and Theorem \ref{8}, $\mathcal{S}_{p}$ $\left( 
\mathcal{S}_{s\vee p}\right) $ forms a subbase for $T_{p}$ $\left( T_{s\vee
p}\right) $ topology on $U,$ respectively.
\end{proof}

\begin{proposition}
Let $U$ be the universe and $R$ is a inverse serial relation. Then the
following conditions are provided:

1) $\mathcal{S}_{s}\preceq \mathcal{S}_{s\vee p},$

2) $T_{s}\preceq T_{s\vee p}.$
\end{proposition}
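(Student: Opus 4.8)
The plan is to follow the same two-step pattern as in the preceding proposition, exploiting the pointwise characterisation of $\preceq$ furnished by Proposition \ref{7} together with the subbase theorems. For part (1) I would reduce the comparison of coverings to a single pointwise inclusion of neighborhoods; for part (2) I would first certify that both $\mathcal{S}_{s}$ and $\mathcal{S}_{s\vee p}$ actually generate topologies under the inverse serial hypothesis, and then transport the refinement from the subbases to the topologies they generate.

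First, for part (1), Proposition \ref{7} tells us that $\mathcal{S}_{s}\preceq\mathcal{S}_{s\vee p}$ is equivalent to the pointwise containment $R_{s}(x)\subseteq R_{s\vee p}(x)$ for all $x\in U$. But this inclusion is one of the relationships between neighborhood systems already recorded in the Preliminaries, namely $R_{s\wedge p}(x)\subseteq R_{s}(x)\subseteq R_{s\vee p}(x)$, and it holds for an arbitrary binary relation $R$; inverse seriality is not even needed at this stage. Hence part (1) is immediate from Proposition \ref{7}.

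Next, for part (2), the point is that both families must be genuine subbases before $T_{s}$ and $T_{s\vee p}$ are even defined. Since $R$ is assumed inverse serial, the cited theorem of Pei \emph{et al.} guarantees that $\mathcal{S}_{s}$ forms a subbase, and hence generates the topology $T_{s}$. Likewise, inverse seriality is one of the two sufficient conditions in Theorem \ref{8}, so $\mathcal{S}_{s\vee p}$ forms a subbase and generates $T_{s\vee p}$. With both topologies in hand, the refinement $\mathcal{S}_{s}\preceq\mathcal{S}_{s\vee p}$ established in part (1) carries over to the generated topologies, yielding $T_{s}\preceq T_{s\vee p}$.

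The only genuine subtlety, and the step I would treat most carefully, is the passage from refinement of subbases to refinement of the topologies they generate. The relation $\preceq$ is defined for coverings, and the paper applies it to topologies by analogy; to make this rigorous one must check that every member of the base of $T_{s}$, that is, each finite intersection of successor neighborhoods, is contained in a corresponding set built from successor-or-predecessor neighborhoods, using that each $R_{s}(x)$ sits inside $R_{s\vee p}(x)$ and that intersections preserve such containments. This is routine, but it is the place where the pointwise inclusion of part (1) is actually doing the work at the level of the generated topologies rather than merely at the level of the subbases.
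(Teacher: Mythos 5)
Your proposal is correct and follows essentially the same route the paper takes: part (1) is immediate from Proposition \ref{7} together with the universal inclusion $R_{s}(x)\subseteq R_{s\vee p}(x)$, and part (2) rests on the Pei et al.\ subbase theorem for $\mathcal{S}_{s}$ and Theorem \ref{8} for $\mathcal{S}_{s\vee p}$, exactly as the paper argues for the parallel serial case. Your closing remark about transporting the refinement from subbases to the generated topologies is a welcome extra degree of care that the paper itself glosses over.
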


\begin{proposition}
Let $U$ be the universe and $R$ is a symmetric relation. Then 
\begin{equation*}
R\text{ }is\text{ }a\text{ }serial\text{ }relation\Longleftrightarrow R\text{
}is\text{ }a\text{ }inverse\text{ }serial\text{ }relation.
\end{equation*}
\end{proposition}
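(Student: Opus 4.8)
The plan is to prove the two implications separately, and in each case to combine one seriality-type hypothesis with symmetry in order to produce the other. The entire argument rests on a single observation: symmetry lets one exchange the two coordinates of any pair belonging to $R$, and this is exactly what converts a ``successor'' witness into a ``predecessor'' witness and conversely.

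First I would show that a symmetric serial relation is inverse serial. Fix an arbitrary $x\in U$. By seriality there is some $y\in U$ with $(x,y)\in R$. Applying symmetry to this pair yields $(y,x)\in R$, so $y$ serves as a witness showing that $x$ has a predecessor. Since $x$ was arbitrary, $R$ is inverse serial.

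The converse is entirely parallel. Assuming that $R$ is symmetric and inverse serial, fix $x\in U$; inverse seriality gives some $y\in U$ with $(y,x)\in R$, and symmetry then delivers $(x,y)\in R$, which is a successor witness for $x$. As $x$ was arbitrary, $R$ is serial, and the two conditions are equivalent.

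I do not expect any genuine obstacle: the statement follows almost immediately from unwinding the definitions, with symmetry acting as the bridge between the two one-sided seriality conditions. The only point requiring care is to keep straight which coordinate the existential quantifier is binding in each definition, since \emph{serial} and \emph{inverse serial} differ precisely in that placement, and it is only symmetry that allows the witness supplied by one definition to be re-used for the other.
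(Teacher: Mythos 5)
Your proof is correct and follows essentially the same route as the paper's: the paper writes the argument as a single chain of equivalences in which symmetry flips the coordinates of the witnessing pair, while you merely unpack the same idea into the two separate implications. No gap.
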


\begin{proof}
Suppose that $R$ is a symmetric relation.

$R$ $is$ $a$ $serial$ $relation$ $\Longleftrightarrow \forall x\exists
y[\left( x,y\right) \in R]$

\ \ \ \ \ \ \ \ \ \ \ \ \ \ \ \ \ \ \ \ \ \ \ \ \ \ \ \ \ $%
\Longleftrightarrow \forall x\exists y[\left( y,x\right) \in R]$

\ \ \ \ \ \ \ \ \ \ \ \ \ \ \ \ \ \ \ \ \ \ \ \ \ \ \ \ \ $%
\Longleftrightarrow R$ $is$ $a$ $inverse$ $serial$ $relation.$
\end{proof}

\begin{proposition}
Let $U$ be the universe and $R$ is a symmetric and a serial (or inverse
serial) relation. Then the following conditions are provided: \ 

1) $\mathcal{S}_{s\wedge p}=\mathcal{S}_{s}=\mathcal{S}_{p}=\mathcal{S}%
_{s\vee p},$ \ 

2) $T_{s\wedge p}=T_{s}=T_{p}=T_{s\vee p}.$
\end{proposition}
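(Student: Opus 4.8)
The plan is to reduce the whole statement to a single pointwise identity: under symmetry the successor and predecessor neighborhoods of every point coincide. First I would observe that if $R$ is symmetric, then for any $x\in U$,
\[
y\in R_{s}(x)\iff (x,y)\in R\iff (y,x)\in R\iff y\in R_{p}(x),
\]
so that $R_{s}(x)=R_{p}(x)$ for every $x$. This is the essential (indeed the only nontrivial) content of the proposition; everything after it is purely formal.

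Granting $R_{s}(x)=R_{p}(x)$, the definitions of the two remaining operators immediately give $R_{s\wedge p}(x)=R_{s}(x)\cap R_{p}(x)=R_{s}(x)$ and $R_{s\vee p}(x)=R_{s}(x)\cup R_{p}(x)=R_{s}(x)$. Hence all four neighborhoods of $x$ agree for each $x\in U$, and therefore the four families $\left\{ R_{i}(x)\mid x\in U\right\}$ are literally the same collection of subsets of $U$. This yields part (1): $\mathcal{S}_{s\wedge p}=\mathcal{S}_{s}=\mathcal{S}_{p}=\mathcal{S}_{s\vee p}$.

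For part (2) I would first confirm that each of the four families really is a subbase. Because $R$ is symmetric, the preceding proposition tells us that $R$ is serial if and only if it is inverse serial, so the hypothesis secures both properties at once. Then the cited successor theorem of \cite{pei} applies to $\mathcal{S}_{s}$, Theorem \ref{1} to $\mathcal{S}_{p}$, the characterization of $\mathcal{S}_{s\wedge p}$ to $\mathcal{S}_{s\wedge p}$, and Theorem \ref{8} to $\mathcal{S}_{s\vee p}$; in every case condition (S0) holds and a topology is generated. Since the topology generated by a subbase is uniquely determined by that subbase, and part (1) shows the four subbases to be identical, the four generated topologies must coincide, giving $T_{s\wedge p}=T_{s}=T_{p}=T_{s\vee p}$.

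I do not expect a serious obstacle. The one point that needs care is the bookkeeping in part (2): one must explicitly invoke the equivalence of seriality and inverse seriality under symmetry, so that all four subbase-existence results are simultaneously applicable, rather than silently assuming it. Once the pointwise equality $R_{s}(x)=R_{p}(x)$ is established, both conclusions follow automatically.
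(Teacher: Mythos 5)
Your proof is correct: symmetry gives $R_{s}(x)=R_{p}(x)$ pointwise, hence all four neighborhoods and therefore all four families $\mathcal{S}_{i}$ coincide, and the hypothesis (together with the equivalence of seriality and inverse seriality under symmetry) guarantees each family satisfies (S0) and generates a topology, so the generated topologies agree. The paper states this proposition without proof, and your argument is exactly the natural completion consistent with the surrounding results (the preceding proposition on serial versus inverse serial and the subbase theorems), so there is nothing to fault.
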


\begin{corollary}
\label{2}Let $U$ be the universe and $R$ is a tolerance (symmetric and
reflexive) relation. Then, the following conditions are provided: \ 

1) $\mathcal{S}_{s\wedge p}=\mathcal{S}_{s}=\mathcal{S}_{p}=\mathcal{S}%
_{s\vee p},$ \ 

2) $T_{s\wedge p}=T_{s}=T_{p}=T_{s\vee p}.$
\end{corollary}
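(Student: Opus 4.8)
The plan is to recognize this corollary as an immediate specialization of the preceding proposition, whose hypothesis asks that $R$ be symmetric together with serial (or inverse serial). Since a tolerance relation is by definition both reflexive and symmetric, the symmetry requirement is already in hand; the only thing left to supply is seriality.

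First I would observe that reflexivity forces both seriality and inverse seriality. Indeed, for each $x\in U$ we have $(x,x)\in R$, so taking $y=x$ witnesses that there exists $y$ with $(x,y)\in R$ (seriality) and also that there exists $y$ with $(y,x)\in R$ (inverse seriality). Hence every reflexive relation is automatically serial and inverse serial, and in particular a tolerance relation is symmetric and serial.

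Consequently $R$ satisfies exactly the hypothesis of the preceding proposition, and applying that proposition yields both conclusions at once: the four neighborhood families $\mathcal{S}_{s\wedge p}$, $\mathcal{S}_{s}$, $\mathcal{S}_{p}$, $\mathcal{S}_{s\vee p}$ coincide, and therefore so do the four generated topologies $T_{s\wedge p}$, $T_{s}$, $T_{p}$, $T_{s\vee p}$.

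The only step calling for any verification is the implication ``reflexive $\Rightarrow$ serial,'' which is immediate from the witness $y=x$, so there is no genuine obstacle here. For completeness one may also recall why the underlying proposition holds: symmetry gives $R_{s}(x)=R_{p}(x)$ for every $x$, whence $R_{s\wedge p}(x)=R_{s}(x)\cap R_{p}(x)=R_{s}(x)$ and $R_{s\vee p}(x)=R_{s}(x)\cup R_{p}(x)=R_{s}(x)$ pointwise, so all four subbases are literally the same family of subsets of $U$ and thus generate the same topology, with seriality needed only to guarantee via condition (S0) that these topologies exist at all.
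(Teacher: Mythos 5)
Your argument is correct and matches the paper's intent exactly: the corollary is stated without proof precisely because it is the immediate specialization of the preceding proposition, obtained by noting that reflexivity (via the witness $y=x$) yields seriality and inverse seriality, while symmetry gives $R_{s}(x)=R_{p}(x)$ and hence the coincidence of all four neighborhood families. Nothing further is needed.
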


\begin{proposition}
Let $U$ be the universe and $R$ is a reflexive relation. Then, the following
conditions are provided: \ 

1) $\mathcal{S}_{s\wedge p}\preceq \mathcal{S}_{s},\mathcal{S}_{p}\preceq 
\mathcal{S}_{s\vee p},$ \ 

2) $T_{s\wedge p}\preceq T_{s},T_{p}\preceq T_{s\vee p}.$
\end{proposition}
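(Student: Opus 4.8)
The plan is to reduce everything to the fundamental neighborhood inclusions recorded in the Preliminaries together with Proposition \ref{7}, after first checking that reflexivity makes all four families genuine subbases, so that the symbols $T_{s\wedge p},T_s,T_p,T_{s\vee p}$ are actually defined. This preliminary check is what the hypothesis of reflexivity is really buying us, since the refinement relation $\preceq$ was only defined between coverings of $U$.

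First I would observe that reflexivity forces $(x,x)\in R$ for every $x$, whence $x\in R_s(x)$ and $x\in R_p(x)$, and therefore $x\in R_{s\wedge p}(x)=R_s(x)\cap R_p(x)$ and $x\in R_{s\vee p}(x)$. Consequently every one of $\mathcal{S}_s,\mathcal{S}_p,\mathcal{S}_{s\wedge p},\mathcal{S}_{s\vee p}$ covers $U$, i.e. condition $(S0)$ holds in all four cases, so each is a subbase and the four topologies exist. Equivalently, a reflexive relation is at once serial and inverse serial, so \cite{pei}, Theorem \ref{1} and Theorem \ref{8} already cover $\mathcal{S}_s,\mathcal{S}_p,\mathcal{S}_{s\vee p}$; only $\mathcal{S}_{s\wedge p}$ needs the direct covering computation just given.

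For part (1) I would simply quote the inclusions $R_{s\wedge p}(x)\subseteq R_s(x)$ and $R_p(x)\subseteq R_{s\vee p}(x)$, valid for an arbitrary binary relation and hence for our reflexive $R$. Feeding these pointwise inclusions into the equivalence of Proposition \ref{7}, namely $R_i(x)\subseteq R_j(x)\Leftrightarrow\mathcal{S}_i\preceq\mathcal{S}_j$, yields $\mathcal{S}_{s\wedge p}\preceq\mathcal{S}_s$ and $\mathcal{S}_p\preceq\mathcal{S}_{s\vee p}$ immediately. For part (2) I would argue exactly as in the serial and inverse-serial propositions above: since both members of each pair are now known to be subbases for $T_{s\wedge p},T_s$ respectively $T_p,T_{s\vee p}$, the subbase refinement from part (1) transfers to the generated topologies, giving $T_{s\wedge p}\preceq T_s$ and $T_p\preceq T_{s\vee p}$.

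The main obstacle is precisely the family $\mathcal{S}_{s\wedge p}$. The earlier characterization makes $\mathcal{S}_{s\wedge p}$ a subbase under symmetry together with seriality, and reflexivity does \emph{not} supply symmetry; so I must resist invoking that theorem and instead establish $(S0)$ for $\mathcal{S}_{s\wedge p}$ directly from $x\in R_{s\wedge p}(x)$, as above. Once $T_{s\wedge p}$ is guaranteed to exist in this way, the remainder is the routine translation through Proposition \ref{7}, and the two displayed refinements follow without further work.
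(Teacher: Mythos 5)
Your proposal is correct and follows exactly the route the paper uses for the analogous serial and inverse-serial propositions (the paper in fact states this reflexive version without any proof at all): part (1) from the universal pointwise inclusions $R_{s\wedge p}(x)\subseteq R_s(x)$, $R_p(x)\subseteq R_{s\vee p}(x)$ via Proposition \ref{7}, and part (2) by transferring the refinement to the generated topologies once the relevant families are known to be subbases. Your one genuine addition is worthwhile: since reflexivity is neither symmetric nor covered by the stated subbase criterion for $\mathcal{S}_{s\wedge p}$, you verify $(S0)$ for $\mathcal{S}_{s\wedge p}$ directly from $x\in R_{s\wedge p}(x)$, a step the paper silently skips (and which incidentally shows the ``only if'' direction of its $\mathcal{S}_{s\wedge p}$ subbase theorem cannot be taken literally).
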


\begin{corollary}
Let $U$ be the universe and $R$ is a preorder (reflexive and transitive)
relation. Then the following conditions are provided: \ 

1) $\mathcal{S}_{s\wedge p}\preceq \mathcal{S}_{s},\mathcal{S}_{p}\preceq 
\mathcal{S}_{s\vee p},$ \ 

2) $T_{s\wedge p}\preceq T_{s},T_{p}\preceq T_{s\vee p}.$
\end{corollary}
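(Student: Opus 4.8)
The plan is to recognize that this is an immediate corollary of the preceding Proposition rather than a statement requiring fresh work. By definition a preorder is \emph{reflexive} and transitive, so in particular $R$ is reflexive, and the preceding Proposition already records conclusions (1) and (2) for every reflexive relation. I would point out explicitly that the transitivity hypothesis is never actually used here; it appears only because the preorder class is the natural reflexive-and-transitive refinement of the reflexive case.

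For a self-contained argument I would reprove (1) and (2) following the template of the earlier propositions. For part (1), the inclusions $R_{s\wedge p}(x)\subseteq R_s(x)$ and $R_p(x)\subseteq R_{s\vee p}(x)$ hold for \emph{every} binary relation, being immediate from the identities $R_{s\wedge p}(x)=R_s(x)\cap R_p(x)$ and $R_{s\vee p}(x)=R_s(x)\cup R_p(x)$. By Proposition \ref{7} these pointwise inclusions are equivalent to $\mathcal{S}_{s\wedge p}\preceq\mathcal{S}_s$ and $\mathcal{S}_p\preceq\mathcal{S}_{s\vee p}$, which is exactly (1).

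For part (2) I would first confirm that all four topologies exist. Since $R$ is reflexive, $(x,x)\in R$ for every $x$, so $R$ is simultaneously serial and inverse serial, and moreover $x\in R_{s\wedge p}(x)$; hence each family $\mathcal{S}_s$, $\mathcal{S}_p$, $\mathcal{S}_{s\wedge p}$, $\mathcal{S}_{s\vee p}$ satisfies the covering condition (S0), and by Theorem \ref{1}, Theorem \ref{8}, and the accompanying subbase theorems each generates a topology. As in the earlier propositions, once both subbases exist and stand in the relation $\preceq$, the same relation is inherited by the generated topologies, giving $T_{s\wedge p}\preceq T_s$ and $T_p\preceq T_{s\vee p}$.

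There is essentially no deep obstacle; the only point worth stating carefully is the existence of $T_{s\wedge p}$, since the subbase theorem for $\mathcal{S}_{s\wedge p}$ is phrased in terms of symmetry. Reflexivity sidesteps this directly: taking $x=y$ and using $(y,y)\in R$ shows $y\in R_{s\wedge p}(y)$, so $U=\bigcup_{x\in U}R_{s\wedge p}(x)$ and (S0) holds with no appeal to symmetry. With all four topologies thereby in hand, the corollary follows immediately from the reflexive case.
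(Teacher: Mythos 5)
Your proposal is correct and matches the paper's (implicit) argument: the paper offers no separate proof, treating the corollary as an immediate specialization of the preceding proposition for reflexive relations, which is exactly your main line. Your additional remarks --- that transitivity is never used, and that reflexivity directly yields the covering condition (S0) for $\mathcal{S}_{s\wedge p}$ without invoking symmetry --- are accurate and, if anything, patch a small looseness in the paper's subbase theorem for $\mathcal{S}_{s\wedge p}$.
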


\begin{remark}
If $R$ is a preorder relation on $U$ , $\mathcal{S}_{s\wedge p}$ $\left( 
\mathcal{S}_{s},\mathcal{S}_{p},\mathcal{S}_{s\vee p}\right) $ form a base
for $T_{s\wedge p}$ $\left( T_{s},T_{p},T_{s\vee p}\right) $ topology on $U,$
respectively.
\end{remark}

\begin{corollary}
Let $U$ be the universe and $R$ is an equivalent relation. Then, the
following conditions are provided: \ 

1) $\mathcal{S}_{s\wedge p}=\mathcal{S}_{s}=\mathcal{S}_{p}=\mathcal{S}%
_{s\vee p},$ \ 

2) $T_{s\wedge p}=T_{s}=T_{p}=T_{s\vee p}$
\end{corollary}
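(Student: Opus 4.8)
The plan is to derive this corollary as an immediate specialization of Corollary~\ref{2}. An equivalence relation is by definition reflexive, symmetric and transitive; in particular it is both reflexive and symmetric, which is precisely the definition of a tolerance relation. Hence Corollary~\ref{2} applies verbatim to $R$ and delivers both conclusions (1) and (2) at once. Thus the only thing that genuinely needs to be observed is the class inclusion ``every equivalence relation is a tolerance relation''; everything else is inherited from the already-established corollary.

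For completeness I would also record the direct argument, since it exposes \emph{why} the four neighborhoods collapse. The single active hypothesis is symmetry: for all $x,y\in U$ we have $(x,y)\in R$ if and only if $(y,x)\in R$, so $R_{s}(x)=\{y:(x,y)\in R\}=\{y:(y,x)\in R\}=R_{p}(x)$ for every $x$. Substituting into the definitions of the remaining two neighborhoods gives $R_{s\wedge p}(x)=R_{s}(x)\cap R_{p}(x)=R_{s}(x)$ and $R_{s\vee p}(x)=R_{s}(x)\cup R_{p}(x)=R_{s}(x)$. Hence all four set-valued operators agree pointwise, and consequently the four subbase families $\mathcal{S}_{s},\mathcal{S}_{p},\mathcal{S}_{s\wedge p},\mathcal{S}_{s\vee p}$ are literally the same collection of subsets of $U$, which is statement (1).

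For statement (2) I would first confirm that each of the four topologies is actually defined. Reflexivity of $R$ gives $(x,x)\in R$ for every $x$, so $R$ is simultaneously serial and inverse serial; by Theorem~\ref{1}, Theorem~\ref{8} and the two intervening theorems, condition (S0) holds for each of $\mathcal{S}_{s},\mathcal{S}_{p},\mathcal{S}_{s\wedge p},\mathcal{S}_{s\vee p}$, so the topologies $T_{s},T_{p},T_{s\wedge p},T_{s\vee p}$ all exist. Since a topology generated by a subbase depends only on that subbase, and the four subbases coincide by (1), the four generated topologies coincide as well, giving (2).

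There is no real obstacle here: the proof is pure bookkeeping once the containment of relation classes is noticed. The only point worth flagging is that transitivity plays no role in establishing either equality---symmetry alone forces the neighborhoods to merge, and reflexivity alone guarantees the topologies exist. Transitivity matters only for the sharper observation made in the preceding remark, namely that under a preorder these families are in fact bases, not merely subbases, for the respective topologies.
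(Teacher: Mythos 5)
Your proposal is correct and follows exactly the route the paper intends: the corollary is an immediate specialization of Corollary~\ref{2}, since every equivalence relation is in particular reflexive and symmetric, i.e.\ a tolerance relation. The supplementary direct argument (symmetry collapses the four neighborhoods, reflexivity guarantees the topologies exist) is also sound and matches the reasoning used in the paper's earlier propositions.
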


\begin{remark}
In the case when $R$ is an equivalent relation on $U$, i.e, $\left(
U,R\right) $ is a Pawlak approximation space. Moreover, the set $\mathcal{S}%
_{s\wedge p}$ $\left( \mathcal{S}_{s},\mathcal{S}_{p},\mathcal{S}_{s\vee
p}\right) $ is a base for $T_{s\wedge p}$ $\left( T_{s},T_{p},T_{s\vee
p}\right) $ topology on $U.$ In these topologies, each neighborhood operator
is one equivalence class for all $x\in U$.
\end{remark}

\section{ Rough approximation operators induced by relations}

In this section, we investigate connection between lower and upper
approximation operators using successor, predecessor,
successor-and-predecessor, and successor-or-predecessor neighborhood
operators by various binary relations on a universe, respectively.

\begin{proposition}
\label{4}Let $U$ be the universe and $R$ is a binary relation. Then, for
lower and upper approximation operators of $X\subseteq U$, the following
conditions are provided: \ 

1) $\underline{apr}_{p\vee s}\left( X\right) \subseteq \underline{apr}%
_{s}\left( X\right) ,$ $\underline{apr}_{p}\left( X\right) \subseteq%
\underline{apr}_{p\wedge s}\left( X\right) ,$ \ 

2)$\overline{apr}_{p\wedge s}\left( X\right) \subseteq \overline{apr}%
_{s}\left( X\right) ,$ $\overline{apr}_{p}\left( X\right) \subseteq\overline{%
apr}_{p\vee s}\left( X\right) .$
\end{proposition}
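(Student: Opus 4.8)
The plan is to reduce all four claimed inclusions to a single monotonicity principle, feeding it with the neighborhood containments $R_{s\wedge p}(x)\subseteq R_{s}(x)\subseteq R_{s\vee p}(x)$ and $R_{s\wedge p}(x)\subseteq R_{p}(x)\subseteq R_{s\vee p}(x)$ recorded in the Preliminaries (and noting that $R_{p\vee s}=R_{s\vee p}$ and $R_{p\wedge s}=R_{s\wedge p}$, since union and intersection are symmetric in $s,p$). The key observation driving everything is that the two approximation operators respond to neighborhood inclusions in opposite ways: the lower approximation is order-reversing in the neighborhood, whereas the upper approximation is order-preserving.

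First I would make this precise as a one-line monotonicity fact. Suppose $N_{1}(x)\subseteq N_{2}(x)$ for every $x\in U$. For the lower approximation, if $x$ satisfies $N_{2}(x)\subseteq X$ then a fortiori $N_{1}(x)\subseteq X$, so membership in the $N_{2}$-lower approximation forces membership in the $N_{1}$-lower approximation; hence $\underline{apr}_{N_{2}}(X)\subseteq\underline{apr}_{N_{1}}(X)$. For the upper approximation, if $N_{1}(x)\cap X\neq\emptyset$ then also $N_{2}(x)\cap X\neq\emptyset$, whence $\overline{apr}_{N_{1}}(X)\subseteq\overline{apr}_{N_{2}}(X)$. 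Both implications are immediate from unwinding the membership conditions in the Definition of the approximation operators.

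Second I would simply instantiate this fact with the four relevant links of the inclusion chain. For part (1), the order-reversing direction applied to $R_{s}(x)\subseteq R_{s\vee p}(x)$ gives $\underline{apr}_{p\vee s}(X)\subseteq\underline{apr}_{s}(X)$, and applied to $R_{s\wedge p}(x)\subseteq R_{p}(x)$ gives $\underline{apr}_{p}(X)\subseteq\underline{apr}_{p\wedge s}(X)$. For part (2), the order-preserving direction applied to $R_{s\wedge p}(x)\subseteq R_{s}(x)$ gives $\overline{apr}_{p\wedge s}(X)\subseteq\overline{apr}_{s}(X)$, and applied to $R_{p}(x)\subseteq R_{s\vee p}(x)$ gives $\overline{apr}_{p}(X)\subseteq\overline{apr}_{p\vee s}(X)$.

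I anticipate no serious obstacle: the entire content lies in the monotonicity fact, whose proof is a direct set-theoretic unwinding of the definitions. The only point requiring a little care is bookkeeping on the directions, namely remembering that the condition ``$\subseteq X$'' in the lower approximation reverses the inclusion while ``$\cap X\neq\emptyset$'' in the upper approximation preserves it, and then matching each of the four claims to the correct link in the chain $R_{s\wedge p}(x)\subseteq R_{s}(x),R_{p}(x)\subseteq R_{s\vee p}(x)$.
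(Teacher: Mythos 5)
Your proposal is correct and is essentially the paper's own argument: both rest on the neighborhood inclusions $R_{s\wedge p}(x)\subseteq R_{s}(x),R_{p}(x)\subseteq R_{s\vee p}(x)$ and the observation that the lower approximation reverses such inclusions while the upper approximation preserves them. You merely package the four case-by-case verifications of the paper into one explicit monotonicity lemma, which is a harmless (and arguably cleaner) reorganization.
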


\begin{proof}
$\left( 1\right) $ Let $\ x\in\underline{apr}_{p\vee s}\left( X\right) $ for
any $x\in U.$ Since $R_{p\vee s}\left( x\right) \subseteq X$ and $%
R_{s}\left( x\right) \subseteq R_{p\vee s}\left( x\right) $ then $%
R_{s}\left( x\right) \subseteq X$ and so $x\in\underline{apr}_{s}\left(
X\right) .$ Now, since $x\in\underline{apr}_{s}\left( X\right) $ and $%
R_{p\wedge s}\left( x\right) \subseteq R_{s}\left( x\right) \subseteq X$
then $x\in\underline{apr}_{p\wedge s}\left( X\right) .$ Therefore $%
\underline{apr}_{p\vee s}\left( X\right) \subseteq\underline{apr}_{s}\left(
X\right) $ $\subseteq\underline{apr}_{p\wedge s}\left( X\right) .$
Similarly, $\underline{apr}_{p\vee s}\left( X\right) \subseteq$ $\underline{%
apr}_{p}\left( X\right) \subseteq\underline{apr}_{p\wedge s}\left( X\right)
. $

$\left( 2\right) $ Let $\ x\in\overline{apr}_{p\wedge s}\left( X\right) $
for any $x\in U.$ Since $R_{p\wedge s}\left( x\right) \cap X\neq\emptyset$
and $R_{p\wedge s}\left( x\right) \subseteq R_{s}\left( x\right)
,R_{p}\left( x\right) \subseteq R_{p\vee s}\left( x\right) $ then $R_{p\vee
s}\left( x\right) \cap X\neq\emptyset$ \ an so $x\in\overline {apr}_{p\vee
s}\left( X\right) .$ Therefore, $\overline{apr}_{p\wedge s}\left( X\right)
\subseteq\overline{apr}_{s}\left( X\right) ,$ $\overline{apr}_{p}\left(
X\right) \subseteq\overline{apr}_{p\vee s}\left( X\right) .$
\end{proof}

\begin{example}
Let $U=\left\{ a,b,c,d\right\} $ and 
\begin{equation*}
R=\left\{ \left( a,a\right) ,\left( a,c\right) ,\left( b,c\right) ,\left(
c,a\right) ,\left( c,d\right) \right\}
\end{equation*}
be a binary relation on $U.$ Then,%
\begin{align*}
R_{s}\left( a\right) & =\left\{ a,c\right\} ,\text{ }R_{s}\left( b\right)
=\left\{ c\right\} ,\text{ }R_{s}\left( c\right) =\left\{ a,d\right\} ,\text{
}R_{s}\left( d\right) =\emptyset, \\
R_{p}\left( a\right) & =\left\{ a,c\right\} ,\text{ }R_{p}\left( b\right)
=\emptyset,\text{ }R_{p}\left( c\right) =\left\{ a,b\right\} ,\text{ }%
R_{p}\left( d\right) =\left\{ c\right\} , \\
R_{p\wedge s}\left( a\right) & =\left\{ a,c\right\} ,\text{ }R_{p\wedge
s}\left( b\right) =\emptyset,\text{ }R_{p\wedge s}\left( c\right) =\left\{
a\right\} ,\text{ }R_{p\wedge s}\left( d\right) =\emptyset, \\
R_{p\vee s}\left( a\right) & =\left\{ a,c\right\} ,\text{ }R_{p\vee s}\left(
b\right) =\left\{ c\right\} ,\text{ }R_{p\vee s}\left( c\right) =\left\{
a,b,d\right\} ,\text{ }R_{p\vee s}\left( d\right) =\left\{ c\right\} .
\end{align*}
Let $X=\left\{ a,c,d\right\} .$ Then,%
\begin{align*}
\underline{apr}_{s}\left( X\right) & =\left\{ a,b,c,d\right\} , \\
\underline{apr}_{p}\left( X\right) & =\left\{ a,b,d\right\} , \\
\underline{apr}_{p\wedge s}\left( X\right) & =\left\{ a,b,c,d\right\} , \\
\underline{apr}_{p\vee s}\left( X\right) & =\left\{ a,b,d\right\} , \\
\overline{apr}_{s}\left( X\right) & =\left\{ a,b,c\right\} , \\
\overline{apr}_{p}\left( X\right) & =\left\{ a,c,d\right\} , \\
\overline{apr}_{p\wedge s}\left( X\right) & =\left\{ a,c\right\} , \\
\overline{apr}_{p\vee s}\left( X\right) & =\left\{ a,b,c,d\right\} .
\end{align*}
Hence, note that $\underline{apr}_{s}\left(X\right)\supset \overline{apr}%
_{s}\left( X\right).$
\end{example}

In the original rough set theory, lower approximation of $X$ is a subset its
upper approximation. In order to provide this condition, we need some
properties to add binary relations.

\begin{proposition}
\cite{jarvinen}\label{3} Let $U$ be the universe and $R$ is a binary
relation. Then, for all $x\in U$%
\begin{equation*}
R\text{ is serial }\Rightarrow \underline{apr}_{s}\left( X\right) \subseteq 
\overline{apr}_{s}\left( X\right)
\end{equation*}
\end{proposition}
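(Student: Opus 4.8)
The plan is to prove the inclusion $\underline{apr}_{s}(X) \subseteq \overline{apr}_{s}(X)$ by a direct element-chasing argument, exactly in the spirit of the proof of Proposition \ref{4}. I would fix an arbitrary $x \in \underline{apr}_{s}(X)$ and show that $x \in \overline{apr}_{s}(X)$; since $x$ is arbitrary, this yields the desired set inclusion.

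First I would unfold the definition of the lower approximation: the statement $x \in \underline{apr}_{s}(X)$ means precisely that $R_{s}(x) \subseteq X$. The key step is then to bring in the seriality hypothesis. By the definition of a serial relation, for the fixed $x$ there exists some $y \in U$ with $(x,y) \in R$, that is, $R_{s}(x) \neq \emptyset$. I would pick any such $y \in R_{s}(x)$. Combining the two facts, from $R_{s}(x) \subseteq X$ and $y \in R_{s}(x)$ I obtain $y \in X$, hence $y \in R_{s}(x) \cap X$, so that $R_{s}(x) \cap X \neq \emptyset$. By the definition of the upper approximation this is exactly the assertion $x \in \overline{apr}_{s}(X)$, which completes the argument.

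There is essentially no hard part in the derivation itself; the entire content of the statement is carried by the seriality hypothesis, whose sole role is to forbid empty successor neighborhoods. I would emphasize that this assumption is not cosmetic: if $R_{s}(x) = \emptyset$ for some $x$, then $x$ lies \emph{vacuously} in $\underline{apr}_{s}(X)$ for every $X$ (the condition $R_{s}(x) \subseteq X$ holds trivially), yet $x$ can never belong to $\overline{apr}_{s}(X)$, so the inclusion would fail. This is precisely what occurs at the point $d$ in the preceding example, where $R_{s}(d) = \emptyset$ and indeed $\underline{apr}_{s}(X) \supset \overline{apr}_{s}(X)$. Thus the only point worth highlighting is that seriality is exactly the condition needed to exclude such counterexamples, and the proof is otherwise immediate.
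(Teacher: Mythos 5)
Your proof is correct and is essentially the argument the paper itself uses: the paper cites this result without proof, but its proof of the mirror-image Proposition \ref{5} (inverse serial, $\underline{apr}_{p}$ versus $\overline{apr}_{p}$) is exactly your element-chase, with seriality supplying $R_{s}(x)\neq\emptyset$ so that $R_{s}(x)\subseteq X$ forces $R_{s}(x)\cap X\neq\emptyset$. Your added observation that the point $d$ with $R_{s}(d)=\emptyset$ in the preceding example is precisely what breaks the inclusion without seriality is accurate and a worthwhile clarification.
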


\begin{corollary}
Let $U$ be the universe and $R$ is a binary relation. Then, for all $x\in U$%
\begin{equation*}
R\text{ is serial }\Rightarrow\underline{apr}_{p\vee s}\left( X\right)
\subseteq\underline{apr}_{s}\left( X\right) \subseteq\overline{apr}%
_{s}\left( X\right) \subseteq\overline{apr}_{p\vee s}\left( X\right)
\end{equation*}
\end{corollary}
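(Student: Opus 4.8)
The plan is to split the chain of three inclusions into its three links and treat each one separately, observing that only the central link genuinely requires the seriality hypothesis; the two outer links are pure monotonicity facts that hold for every binary relation.

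First I would dispose of the two outer inclusions. The leftmost one, $\underline{apr}_{p\vee s}(X)\subseteq\underline{apr}_{s}(X)$, is exactly the first assertion of Proposition \ref{4}, so it may simply be quoted. For the rightmost one, $\overline{apr}_{s}(X)\subseteq\overline{apr}_{p\vee s}(X)$, I would argue directly from the neighborhood containment $R_{s}(x)\subseteq R_{s\vee p}(x)$ recorded in the preliminaries: if $x\in\overline{apr}_{s}(X)$ then $R_{s}(x)\cap X\neq\emptyset$, and since $R_{s}(x)\subseteq R_{p\vee s}(x)$ the larger intersection $R_{p\vee s}(x)\cap X$ is also nonempty, whence $x\in\overline{apr}_{p\vee s}(X)$. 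This is just the upper-approximation counterpart of the monotonicity already exploited in Proposition \ref{4}.

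The middle inclusion $\underline{apr}_{s}(X)\subseteq\overline{apr}_{s}(X)$ is the only one that can fail for an arbitrary relation; indeed the Example immediately preceding this corollary exhibits a nonserial $R$ for which $\underline{apr}_{s}(X)\supset\overline{apr}_{s}(X)$. Here I would invoke the hypothesis that $R$ is serial and cite Proposition \ref{3}, which states precisely this implication. Concatenating the three links then yields the asserted chain.

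The main obstacle is conceptual rather than computational: one must recognize that seriality is needed only once, in the central step, while the two flanking inclusions are free consequences of the containment $R_{s}(x)\subseteq R_{s\vee p}(x)$. It is also worth noting that $p\vee s$ and $s\vee p$ denote the same successor-or-predecessor operator, so the corollary's notation is consistent with that of the preliminaries and no reindexing is required.
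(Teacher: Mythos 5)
Your proposal is correct and follows the same route as the paper, which simply cites Proposition \ref{4} for the two outer inclusions and Proposition \ref{3} for the middle one; your explicit derivation of $\overline{apr}_{s}(X)\subseteq\overline{apr}_{p\vee s}(X)$ from $R_{s}(x)\subseteq R_{s\vee p}(x)$ just spells out what the proof of Proposition \ref{4}(2) already contains. No changes needed.
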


\begin{proof}
Proof is clear from Proposition \ref{4} and Proposition \ref{3}.
\end{proof}

\begin{proposition}
\label{5}Let $U$ be the universe and $R$ is a binary relation. Then, for all 
$x\in U$%
\begin{equation*}
R\text{ is invers serial }\Rightarrow \underline{apr}_{p}\left( X\right)
\subseteq \overline{apr}_{p}\left( X\right) .
\end{equation*}
\end{proposition}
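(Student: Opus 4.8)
The plan is to mirror the structure of Proposition \ref{3}, since the predecessor case is completely dual to the successor case. The crucial observation is that inverse seriality is precisely the condition guaranteeing that the predecessor neighborhood of every point is non-empty: by definition, $R$ is inverse serial exactly when for each $x\in U$ there exists some $y\in U$ with $(y,x)\in R$, which is to say $R_{p}(x)\neq\emptyset$ for all $x\in U$. This reformulation is the whole engine of the proof, so I would state it first and then feed it into the membership definitions of Definition 2.1.

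First I would unwind the membership in the lower approximation. Let $x\in\underline{apr}_{p}(X)$; by definition this means $R_{p}(x)\subseteq X$. Next I would invoke inverse seriality to produce a witness: since $R_{p}(x)\neq\emptyset$, choose any $y\in R_{p}(x)$. Combining the two facts gives $y\in R_{p}(x)\subseteq X$, hence $y\in R_{p}(x)\cap X$, so that $R_{p}(x)\cap X\neq\emptyset$. This is exactly the defining condition for $x\in\overline{apr}_{p}(X)$, yielding the claimed inclusion $\underline{apr}_{p}(X)\subseteq\overline{apr}_{p}(X)$.

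There is no genuine obstacle in this argument; the only subtlety worth emphasizing is \emph{why} the hypothesis is indispensable. Without inverse seriality, a point $x$ with $R_{p}(x)=\emptyset$ would satisfy $R_{p}(x)\subseteq X$ vacuously, placing it in $\underline{apr}_{p}(X)$, while simultaneously failing $R_{p}(x)\cap X\neq\emptyset$, so $x\notin\overline{apr}_{p}(X)$. Thus inverse seriality is used essentially at precisely the step where the witness $y$ is extracted, and I would flag this to make clear that the implication is sharp. Since the reasoning is dual to that of Proposition \ref{3}, the write-up can be kept to just these few lines.
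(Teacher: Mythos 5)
Your proof is correct and follows essentially the same route as the paper's: both arguments reduce inverse seriality to the statement $R_{p}(x)\neq\emptyset$ and then observe that $R_{p}(x)\subseteq X$ forces $R_{p}(x)\cap X=R_{p}(x)\neq\emptyset$. Your version merely makes the witness $y$ explicit where the paper uses the set identity directly, so there is no substantive difference.
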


\begin{proof}
Let $\ x\in\underline{apr}_{p}\left( X\right) .$ Then $R_{p}\left( x\right)
\subseteq X,$ which gives $R_{p}\left( x\right) \cap X=R_{p}\left( x\right)
\neq\emptyset,$ that is, $x\in\overline{apr}_{p}\left( X\right) .$
\end{proof}

\begin{corollary}
Let $U$ be the universe and $R$ is a binary relation. Then, for all $x\in U$%
\begin{equation*}
R\text{ is invers serial }\Rightarrow\underline{apr}_{p\vee s}\left(
X\right) \subseteq\underline{apr}_{p}\left( X\right) \subseteq \overline{apr}%
_{p}\left( X\right) \subseteq\overline{apr}_{p\vee s}\left( X\right) .
\end{equation*}
\end{corollary}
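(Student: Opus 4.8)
The plan is to read off the four-term chain as a transitive concatenation of three inclusions, each of which has already been established earlier in the paper; the inverse serial hypothesis will enter only through the central link. Concretely, I would break the target chain
\[
\underline{apr}_{p\vee s}(X)\subseteq\underline{apr}_{p}(X)\subseteq\overline{apr}_{p}(X)\subseteq\overline{apr}_{p\vee s}(X)
\]
into its three successive containments and justify them one at a time, then close the argument by transitivity of $\subseteq$.

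First I would invoke Proposition \ref{4}(1), which is valid for an arbitrary binary relation $R$, to obtain the leftmost inclusion $\underline{apr}_{p\vee s}(X)\subseteq\underline{apr}_{p}(X)$ (this containment appears explicitly in the proof of that proposition). Next, I would apply Proposition \ref{5}: this is precisely the step that consumes the assumption that $R$ is inverse serial, and it yields the middle inclusion $\underline{apr}_{p}(X)\subseteq\overline{apr}_{p}(X)$, since $R_{p}(x)\subseteq X$ forces $R_{p}(x)\cap X=R_{p}(x)\neq\emptyset$ once each $R_{p}(x)$ is nonempty. Finally I would use Proposition \ref{4}(2), once again for an arbitrary relation, to supply the rightmost inclusion $\overline{apr}_{p}(X)\subseteq\overline{apr}_{p\vee s}(X)$.

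I expect no genuine obstacle here: the statement is a routine assembly of prior results, exactly parallel to the preceding corollary for the serial case, which combined Proposition \ref{4} with Proposition \ref{3}. The only point deserving care is the bookkeeping of hypotheses, namely the observation that the two flanking inclusions drawn from Proposition \ref{4} require no seriality assumption whatsoever, so that inverse seriality is invoked solely to license Proposition \ref{5} for the central inequality. Hence the proof reduces to the single line that the conclusion is immediate from Proposition \ref{4} and Proposition \ref{5}.
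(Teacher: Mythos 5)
Your proposal matches the paper's proof exactly: the paper also cites Proposition \ref{4} for the two outer inclusions (the containment $\underline{apr}_{p\vee s}(X)\subseteq\underline{apr}_{p}(X)$ indeed appears in the proof of that proposition, and $\overline{apr}_{p}(X)\subseteq\overline{apr}_{p\vee s}(X)$ is part (2)) and Proposition \ref{5} for the middle inclusion, where inverse seriality is used. Your bookkeeping of which hypothesis is needed where is correct and, if anything, more explicit than the paper's one-line proof.
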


\begin{proof}
Proof is clear from proposition \ref{4} and proposition \ref{5} .
\end{proof}

\begin{proposition}
Let $U$ be the universe and $R$ is a binary relation. Then, for all $x\in U$%
\begin{equation*}
R\text{ is symmetric and serial (or invers serial)}\Rightarrow\underline {apr%
}_{p\wedge s}\left( X\right) \subseteq\overline{apr}_{p\wedge s}\left(
X\right) .
\end{equation*}
\end{proposition}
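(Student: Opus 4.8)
The plan is to show that the symmetry hypothesis forces the successor-and-predecessor neighborhood of every point to be nonempty, and then to reuse the emptiness-avoidance argument of Proposition \ref{5} essentially verbatim.

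First I would record the key consequence of symmetry. If $R$ is symmetric then $(x,y)\in R$ if and only if $(y,x)\in R$, so the successor and predecessor neighborhoods coincide: $R_{s}(x)=R_{p}(x)$ for every $x\in U$. Consequently
\begin{equation*}
R_{s\wedge p}(x)=R_{s}(x)\cap R_{p}(x)=R_{s}(x)=R_{p}(x)
\end{equation*}
for all $x\in U$. This is the only place where symmetry is used, and it is what makes the seriality hypothesis relevant.

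Next I would reduce to a single seriality condition. By the earlier proposition asserting that a symmetric relation is serial if and only if it is inverse serial, it suffices to treat the serial case. If $R$ is serial then $R_{s}(x)\neq\emptyset$ for every $x$, and the displayed identity immediately yields $R_{s\wedge p}(x)\neq\emptyset$ for all $x\in U$. Finally I would close the argument exactly as in Proposition \ref{5}: letting $x\in\underline{apr}_{p\wedge s}(X)$, we have $R_{s\wedge p}(x)\subseteq X$, and since $R_{s\wedge p}(x)\neq\emptyset$ it follows that $R_{s\wedge p}(x)\cap X=R_{s\wedge p}(x)\neq\emptyset$, that is, $x\in\overline{apr}_{p\wedge s}(X)$. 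Hence $\underline{apr}_{p\wedge s}(X)\subseteq\overline{apr}_{p\wedge s}(X)$.

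The only real content is the first step: recognizing that symmetry collapses $R_{s\wedge p}$ onto $R_{s}$ (equivalently onto $R_{p}$), which is precisely what allows the seriality hypothesis to guarantee nonemptiness of the neighborhood. Once that observation is in hand there is no genuine obstacle, since the remainder is the same one-line nonemptiness argument already used for the successor and predecessor operators in Propositions \ref{3} and \ref{5}.
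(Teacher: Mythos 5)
Your proof is correct and follows essentially the same route as the paper: both use symmetry to identify $R_{s\wedge p}(x)$ with $R_{p}(x)$ (equivalently $R_{s}(x)$) and then invoke seriality to get nonemptiness of the neighborhood, the only cosmetic difference being that the paper cites its earlier proposition $\underline{apr}_{p}(X)\subseteq\overline{apr}_{p}(X)$ after identifying the operators, whereas you rerun the one-line nonemptiness argument directly.
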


\begin{proof}
Let $\ x\in\underline{apr}_{p\wedge s}\left( X\right) .$ Then, from
proposition 14 $R_{p\wedge s}\left( x\right) =R_{p}\left( x\right) $ which
gives $\underline{apr}_{p\wedge s}\left( X\right) =\underline{apr}_{p}\left(
X\right) $. So, \ from proposition 24 $x\in\overline {apr}_{p\wedge s}\left(
X\right) .$
\end{proof}

\begin{proposition}
\label{6}Let $U$ be the universe and $R$ is a binary relation. Then, for all 
$x\in U$%
\begin{equation*}
R\text{ is reflexive}\Rightarrow \underline{apr}_{i}\left( X\right)
\subseteq X\subseteq \overline{apr}_{i}\left( X\right) ,\text{ where }i\text{%
: }s\text{, }p\text{, }p\wedge s\text{ and }p\vee s,\text{ respectively.}
\end{equation*}
\end{proposition}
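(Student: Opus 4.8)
The plan is to reduce all four cases to a single observation about reflexivity, after which both inclusions become immediate. The key fact I would record first is that reflexivity of $R$ forces $x\in R_{i}(x)$ for every neighborhood operator $i:s,p,p\wedge s,p\vee s$. Since $(x,x)\in R$ for all $x\in U$, the definitions give $x\in R_{s}(x)$ and $x\in R_{p}(x)$ at once; consequently $x\in R_{s}(x)\cap R_{p}(x)=R_{p\wedge s}(x)$ and $x\in R_{s}(x)\cup R_{p}(x)=R_{p\vee s}(x)$. Thus one membership statement handles all four labels simultaneously.

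With this in hand I would prove the two inclusions generically. For $\underline{apr}_{i}(X)\subseteq X$, I take any $x\in\underline{apr}_{i}(X)$, so that $R_{i}(x)\subseteq X$ by definition; since $x\in R_{i}(x)$, it follows that $x\in X$. For $X\subseteq\overline{apr}_{i}(X)$, I take any $x\in X$; because $x\in R_{i}(x)$ and $x\in X$, the point $x$ itself lies in $R_{i}(x)\cap X$, so $R_{i}(x)\cap X\neq\emptyset$ and hence $x\in\overline{apr}_{i}(X)$. Running this argument for each of the four operators yields the claim.

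This is precisely the extension of properties $(L3)$ and $(U3)$, stated earlier for an arbitrary neighborhood operator, to the explicit list of four operators used here, so no extra machinery is required. I do not foresee a genuine obstacle; the only point demanding mild care is the reflexivity membership for the two derived operators $R_{p\wedge s}$ and $R_{p\vee s}$, where one must remember that reflexivity places $x$ in both $R_{s}(x)$ and $R_{p}(x)$ simultaneously, so that $x$ survives both the intersection and the union. Since the argument is entirely symmetric in $i$, a single generic proof covers all four cases at once.
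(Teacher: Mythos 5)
Your proof is correct and follows the same route as the paper, which simply cites properties $(L3)$ and $(U3)$; you have merely unpacked that citation by verifying $x\in R_{i}(x)$ for each of the four operators and then deriving both inclusions directly from the definitions.
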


\begin{proof}
Proof is clear from $(L3)$ and $(U3)$.
\end{proof}

\begin{corollary}
Let $U$ be the universe and $R$ is a reflexive or preorder binary relation.
Then, for all $x\in U$%
\begin{align*}
(1)\text{ }\underline{apr}_{p\vee s}\left( X\right) & \subseteq \underline{%
apr}_{p}\left( X\right) \subseteq\underline{apr}_{p\wedge s}\left( X\right)
\subseteq X\subseteq\overline{apr}_{p\wedge s}\left( X\right) \subseteq%
\overline{apr}_{p}\left( X\right) \subseteq\overline {apr}_{p\vee s}\left(
X\right) \\
(2)\text{ }\underline{apr}_{p\vee s}\left( X\right) & \subseteq \underline{%
apr}_{s}\left( X\right) \subseteq\underline{apr}_{p\wedge s}\left( X\right)
\subseteq X\subseteq\overline{apr}_{p\wedge s}\left( X\right) \subseteq%
\overline{apr}_{s}\left( X\right) \subseteq\overline {apr}_{p\vee s}\left(
X\right) \text{ .}
\end{align*}
\end{corollary}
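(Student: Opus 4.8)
The plan is to obtain both chains by concatenating the inclusions among the four approximation operators, already furnished by Proposition~\ref{4} (and its proof), with the sandwiching $\underline{apr}_i(X)\subseteq X\subseteq\overline{apr}_i(X)$ of Proposition~\ref{6}. The first move is to observe that a preorder relation, being reflexive and transitive by definition, is in particular reflexive; hence under either hypothesis $R$ is reflexive, which is exactly what Proposition~\ref{6} requires. Consequently, for every $i\in\{s,p,p\wedge s,p\vee s\}$ one has $\underline{apr}_i(X)\subseteq X\subseteq\overline{apr}_i(X)$, and specializing to $i=p\wedge s$ yields the central link $\underline{apr}_{p\wedge s}(X)\subseteq X\subseteq\overline{apr}_{p\wedge s}(X)$ common to both displayed chains.

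It then remains to supply the outer links, all of which are independent of reflexivity and come from Proposition~\ref{4}. For chain $(1)$, the lower half $\underline{apr}_{p\vee s}(X)\subseteq\underline{apr}_p(X)\subseteq\underline{apr}_{p\wedge s}(X)$ is precisely the $p$-string established in the proof of Proposition~\ref{4}, and the upper half $\overline{apr}_{p\wedge s}(X)\subseteq\overline{apr}_p(X)\subseteq\overline{apr}_{p\vee s}(X)$ is its counterpart for the upper operators. For chain $(2)$ one uses instead the $s$-versions $\underline{apr}_{p\vee s}(X)\subseteq\underline{apr}_s(X)\subseteq\underline{apr}_{p\wedge s}(X)$ and $\overline{apr}_{p\wedge s}(X)\subseteq\overline{apr}_s(X)\subseteq\overline{apr}_{p\vee s}(X)$, again read off from the same argument. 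Splicing each outer pair around the central link produces the two chains verbatim.

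Since every inclusion in sight has already been proved, I expect no genuine obstacle; the plan is essentially bookkeeping. The one point meriting care is that two of the links needed for part $(2)$, namely $\underline{apr}_s(X)\subseteq\underline{apr}_{p\wedge s}(X)$ and $\overline{apr}_{p\wedge s}(X)\subseteq\overline{apr}_s(X)$, occur inside the proof of Proposition~\ref{4} rather than in its formal statement. I would therefore either cite that proof explicitly or re-derive these two inclusions in one line from the containment $R_{p\wedge s}(x)\subseteq R_s(x)\subseteq R_{p\vee s}(x)$, so that the chain for part $(2)$ is fully self-contained.
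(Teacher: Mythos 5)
Your proposal is correct and follows essentially the same route as the paper: the paper's proof is the one-line remark that the corollary is clear from the sandwiching property of Proposition \ref{6} together with the inter-operator inclusions, which is exactly your splicing argument. If anything, you are more careful than the paper, which cites Proposition \ref{3} where the needed ingredient is really Proposition \ref{4} (and the extra links contained in its proof), a point you correctly identify and propose to re-derive from $R_{p\wedge s}(x)\subseteq R_{s}(x)\subseteq R_{p\vee s}(x)$.
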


\begin{proof}
Proof is clear from proposition \ref{3} and proposition \ref{6}.
\end{proof}

\begin{proposition}
Let $U$ be the universe and $R$ is a tolerance or equivalent binary
relation. Then, for all $x\in U$ \ 

$\underline{apr}_{s}\left( X\right) =\underline{apr}_{p}\left( X\right) =%
\underline{apr}_{p\wedge s}\left( X\right) =\underline{apr}_{p\vee s}\left(
X\right) \subseteq X\subseteq\overline{apr}_{s}\left( X\right)$ \ 

$=\overline{apr}_{p}\left( X\right) =\overline{apr}_{p\wedge s}\left(
X\right) =\overline{apr}_{p\vee s}\left( X\right) \text{ .}$
\end{proposition}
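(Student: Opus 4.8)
The plan is to collapse all four approximation operators to a single one by exploiting symmetry, and then obtain the sandwiching by $X$ from reflexivity. The whole argument rests on one pointwise identity, after which everything is bookkeeping from the definitions and the already-proved reflexive case.

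First I would observe that for a symmetric relation the successor and predecessor neighborhoods agree at every point. Indeed, since $(x,y)\in R \Leftrightarrow (y,x)\in R$, the sets $R_{s}(x)=\{y\mid (x,y)\in R\}$ and $R_{p}(x)=\{y\mid (y,x)\in R\}$ coincide, so $R_{s}(x)=R_{p}(x)$ for all $x\in U$. Consequently $R_{p\wedge s}(x)=R_{s}(x)\cap R_{p}(x)=R_{s}(x)$ and $R_{p\vee s}(x)=R_{s}(x)\cup R_{p}(x)=R_{s}(x)$, so all four neighborhood operators coincide pointwise. Both a tolerance relation and an equivalence relation are symmetric, hence this applies in either case.

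Next, since the defining condition of each lower approximation, namely $R_{i}(x)\subseteq X$, and of each upper approximation, namely $R_{i}(x)\cap X\neq\emptyset$, depends only on the neighborhood $R_{i}(x)$, the pointwise coincidence of the four operators forces the equalities $\underline{apr}_{s}(X)=\underline{apr}_{p}(X)=\underline{apr}_{p\wedge s}(X)=\underline{apr}_{p\vee s}(X)$ and, likewise, the equality of all four upper approximations. This delivers the two chains of equalities in the statement.

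Finally, for the middle containment $\underline{apr}_{i}(X)\subseteq X\subseteq\overline{apr}_{i}(X)$, I would note that a tolerance relation and an equivalence relation are both reflexive, and then invoke Proposition \ref{6} directly. I expect no serious obstacle here; the only step requiring genuine care is the initial identity $R_{s}(x)=R_{p}(x)$, on which the entire collapse of the operators depends, while the rest follows immediately from the definitions and from the reflexive case proved earlier.
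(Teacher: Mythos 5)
Your proposal is correct and follows essentially the same route as the paper: symmetry collapses the four neighborhood operators (the paper cites its Corollary \ref{2} for this, you derive the pointwise identity $R_{s}(x)=R_{p}(x)$ directly), and reflexivity yields the sandwich $\underline{apr}_{i}(X)\subseteq X\subseteq\overline{apr}_{i}(X)$ via Proposition \ref{6}. If anything, your version is slightly more careful, since the pointwise coincidence of the neighborhoods at every $x$ is what the equality of the approximation operators actually requires, rather than the equality of the families $\mathcal{S}_{i}$ as sets.
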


\begin{proof}
Proof is clear from corollary \ref{2} and proposition \ref{6}, respectively.
\end{proof}


\begin{thebibliography}{99}
\bibitem{allam} A. A. Allam, M. Y. Bakeir, E. A. Abo-Tabl, Some Methods for
Generating Topologies by Relations, Bull. Malays. Math. Sci. Soc. (2) 31(1)
(2008), 35--45.

\bibitem{janich} K. Janich, Topology, Springer-Verlag, New York,1984.

\bibitem{jarvinen} J. Jarvinen, Properties of Rough Approximations, Journal
of Advanced Computational Intelligence and Intelligent Informatics Vol.9
No.5 (2005) 502-505.

\bibitem{lashin} E.F. Lashin et al, Rough set theory for topological spaces,
International Journal of Approximate Reasoning 40 (2005) 35--43.

\bibitem{li} Z. Li, T. Xie, Q. Li, Topological structure of generalized
rough sets, Computers and Mathematics with Applications 63 (2012) 1066--1071.

\bibitem{pawlak1982} Z. Pawlak, Rough sets, International Journal of
Computer and Information Sciences 11 (1982) 341--356.

\bibitem{pawlak2002} Z. Pawlak, Rough sets and intelligent data analysis,
Information Sciences 147 (2002) 1-12.

\bibitem{pawlak2007} Z. Pawlak, A. Skowron, Rudiments of rough sets,
Information Sciences 177 (2007) 3-27.

\bibitem{pei} Z. Pei, D. Pei , L. Zheng, Topology vs generalized rough sets,
International Journal of Approximate Reasoning 52 (2011) 231--239.

\bibitem{qin} K. Qin, J. Yang, Z. Pei, Generalized rough sets based on
reflexive and transitive relations, Information Sciences 178 (2008)
4138--4141.

\bibitem{thivager} M. L. Thivagar, C. Richard, N. R. Paul, Mathematical
Innovations of a Modern Topology in Medical Events, International Journal of
Information Science 2(4) (2012) 33-36.

\bibitem{xu} W. Xu, X. Zhang, Q. Wang, S. Sun, On general binary relation
based rough set, Journal of Information and Computing Science, Vol. 7, No. 1
(2012) 054-066.

\bibitem{yao} Y.Y. Yao, Constructive and algebraic methods of the theory of
rough sets, Information Sciences 109 (1998) 21- 47.

\bibitem{yao1} Y.Y. Yao, Relational Interpretations of Neighborhood
Operators and Rough Set Approximation Operators, Information Sciences
Vol.111, No.1-4 (1998) 239- 259.
\end{thebibliography}
\end{document}